\newcommand{\cs}{\zeta(s)}
\newcommand{\tcs}{\tilde{\zeta}(s)}
\newcommand{\rd}{\,\mathrm{d}}
\numberwithin{equation}{section}
\newtheorem{theorem}{Theorem}[section]
\newtheorem{lemma}[theorem]{Lemma}
\newtheorem{remark}[theorem]{Remark}
\def\bx{{\bf x}}
\def\bZ{{\bf Z} }
\def\cE{\mathcal{E}}
\def\sgn{\textnormal{sgn}}
\renewcommand{\geq}{\geqslant}
\renewcommand{\ge}{\geqslant}
\renewcommand{\leq}{\leqslant}
\renewcommand{\le}{\leqslant}
\def\calO{{\mathcal O}}
\begin{document}

\title[Dynamics on a Curve with Pairwise Hyper-singular Repulsion ]
{Dynamics of Particles on a Curve with Pairwise Hyper-singular Repulsion }

\author{Douglas Hardin}
\address{Department of Mathematics, Vanderbilt University, Nashville, TN 37240}
\email{douglas.hardin@Vanderbilt.edu}
 
\author{Edward B. Saff}
\address{Department of Mathematics, Vanderbilt University, Nashville, TN 37240}
\email{Edward.B.Saff@Vanderbilt.edu}

\author{Ruiwen Shu}
\address{Department of Mathematics and Center for Scientific Computation and Mathematical Modeling (CSCAMM)\newline
University of Maryland, College Park MD 20742}
\email{rshu@cscamm.umd.edu}

\author{Eitan Tadmor}
\address{Department of Mathematics, Center for Scientific Computation and Mathematical Modeling (CSCAMM)\newline
and Institute for Physical Science \& Technology (IPST)\newline
University of Maryland, College Park MD 20742}
\email{tadmor@umd.edu}

\date{\today}

\subjclass{31C20, 35K55, 35Q70, 92D25.}

\keywords{particle dynamics, Riesz potential, repulsion, rectifiable curve, uniform distribution, porous medium equation}

\thanks{\textbf{Acknowledgment.} The first two authors acknowledge support, in part, 
by the U. S. National Science Foundation under grant DMS-1516400. ET was supported by NSF grants DMS16-13911 and ONR grant N00014-1812465.}
\date{\today}

\begin{abstract}
We investigate the large time behavior of $N$ particles restricted to a smooth closed curve in $\mathbb{R}^d$ and subject to a gradient flow with respect to Euclidean hyper-singular repulsive Riesz $s$-energy with $s>1.$ We show that regardless of their initial positions, for all $N$ and time $t$ large,  their normalized Riesz $s$-energy will be close to the $N$-point minimal possible. Furthermore, the distribution of such particles will be close to uniform with respect to arclength measure along the curve.
\end{abstract}

\maketitle
\tableofcontents

\maketitle

\section{Introduction}

In this paper we consider the first-order $N$-particle model
\begin{equation}\label{eq_si}
\dot{ z}_i = -N^{-s}\sum_{j\ne i} \nabla W(\bx( z_i)-\bx( z_j)) \cdot \bx'( z_i),
\end{equation}
where the particles are interacting through the potential
\begin{equation}\label{Rie}
W(\bx) = W(|\bx|) = \frac{|\bx|^{-s}}{s},
\end{equation}
which is a power-law repulsion potential, assumed to be {\it hyper-singular}: $s>1$.
Here $\bx( z),\, z\in \mathbb{R}$ in $\mathbb{R}^d$ is a unit-length, smooth, closed, non-self-intersecting curve with 1-periodic arc-length parametrization; i.e., $|\bx'( z)|=1$ and $\bx(z+1)=\bx(z)$ for all $z\in \mathbb{R}$. The $N$-particle configuration  $\{\bx(z_i)\}_{i=1}^N$   is represented by the parameters $\bZ=(z_1,z_2,\ldots,z_N)$, where    $z_i=z_i(t)$ are real-valued functions of the time $t\in \mathbb{R}_{\ge 0}$ for $i=1,2,\ldots, N$.  The system \eqref{eq_si} can be rewritten as a gradient flow of the form
\begin{equation}\label{gradflow}
\dot{\bZ}=-N\nabla E(\bZ),
\end{equation}
for the energy
\begin{equation}\label{EDef}
  E=E(\bZ) := N^{-s-1}\sum_{1\le i < j \le N}W(\bx( z_i)-\bx( z_j)),
\end{equation}
which
 satisfies the energy dissipation
\begin{equation}\label{eq0E}
\dot{E} = \nabla E(\bZ)\cdot\dot\bZ=-\frac{1}{N}\sum_i |\dot{ z}_i|^2.
\end{equation}

Without
 loss of generality,  we assume that an $N$-point  configuration parametrization $\bZ=(z_1,z_2,\ldots,z_N)$ is ordered as
\begin{equation}\label{sisort}
 z_1<\cdots< z_N<z_1+1,\end{equation}
and observe that if the initial data (at $t=0$) satisfies \eqref{eq_si},  then \eqref{sisort} holds  for all time due to the singularity of the interaction potential $W$ at 0.  Consistent with the periodicity of $\bx$, we    extend $z_i$ to all $i\in\mathbb{Z}$ by setting $z_{i+N}=z_i+1$ so that $\bx(z_{i+N})=\bx(z_i)$.

The determination of optimal $N$-point configurations confined to a curve or
more generally a manifold, whose pairwise interactions are governed by the  Riesz $s$-potential $W$ in \eqref{Rie} is sometimes referred to when the manifold is the unit sphere $S^d\subset \mathbb{R}^d$ and $s>0$, as the ``generalized
Thomson problem." Determining the minimal energy positions for such points
explicitly is a notoriously difficult problem for which only some very
special cases are known, even for ``small" values of $N$ (see \cite{ConSl}, \cite{BHS2019}).  One of these cases is that of the unit circle in $\mathbb{R}^2$, for which a simple convexity
argument shows that $N$ distinct equally spaced points ($N$-th roots of unity) are the unique (up to rotation) $N$-point configurations that minimize the energy for all $s>0$ and all $N\geq 2$. There are, however, several well-known theorems
that deal with the asymptotics as $N\to\infty$ for optimal configurations on
manifolds in Euclidean space. For curves in $\mathbb{R}^d$ in the hyper-singular 
case $s>1$, the following theorem was proved by Martinez-Finkelstein et. al. in \cite{MMRS}.
\begin{theorem} \label{littlepoppy}
If $s>1$ and $\Gamma$ is a rectifiable Jordan arc or closed curve embedded in $\mathbb{R}^d$ of length one with arc length parametrization ${\bf x}(s)$,
then
$$ 
\lim_{N\to \infty} \min E({\bf Z})=\zeta(s)/s,
$$ 
where the minimum is taken over all $N$-point configurations $\{{\bf x}(z_i)\}_{i=1}^N$ on $\Gamma$ and $\zeta(s)$ is the classical Riemann
zeta function. Moreover, $N$-point minimizing configurations $\{{\bf x}(z_i^*)\}_{i=1}^N$ are asymptotically uniformly distributed with respect
to arc length and, with $d_i^*:=z_{i+1}^*-z_i^*$, satisfy
\begin{equation}\label{disgen}
\sum_{i=1}^N \left| d_i^* - \frac{1}{N}\right | \to 0\,\,\,as\,\,\,N\to\infty.
\end{equation}
\end{theorem}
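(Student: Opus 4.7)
The plan is to establish matching asymptotic upper and lower bounds for $\min E(\mathbf{Z})$ that both tend to $\zeta(s)/s$, and then extract the equidistribution conclusion \eqref{disgen} from a quantitative form of the lower bound. The role of the hyper-singularity $s>1$ is to ensure that $\sum_k k^{-s}=\zeta(s)<\infty$, so the energy is effectively determined by a local block of nearest-neighbor pair interactions; on those local scales the smoothness of $\mathbf{x}$ together with its arc-length parametrization makes the chord-versus-arclength discrepancy a $1+O((k/N)^2)$ relative error that contributes only a vanishing correction.

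For the upper bound, I would test on the equally spaced configuration $z_i^{\textnormal{eq}}=(i-1)/N$, for which the arc separation between the $i$-th and $(i+k)$-th points is exactly $k/N$. A Taylor expansion of $\mathbf{x}$ combined with $|\mathbf{x}'|=1$ and $\mathbf{x}'\cdot\mathbf{x}''=0$ gives $|\mathbf{x}(z+h)-\mathbf{x}(z)|=h(1+O(h^2))$ as $h\to 0$, while for $h$ bounded away from $0$ the simple-closed-curve hypothesis provides a uniform positive lower bound $c_{\Gamma}h$ on the chord. Splitting the pair sum at an intermediate threshold $1\ll K_N\ll N$ (for instance $K_N=\log N$), one obtains
\[
E(\mathbf{Z}^{\textnormal{eq}})\le\frac{1}{s}\sum_{k=1}^{K_N}k^{-s}(1+o(1))+\frac{c_\Gamma^{-s}}{s}\cdot O(K_N^{1-s}),
\]
and the $s>1$ hypothesis makes the tail vanish, so $\limsup_N\min E\le\zeta(s)/s$.

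For the lower bound, I would use for any configuration $\mathbf{Z}$ the elementary fact that the Euclidean chord is dominated by arc length: $|\mathbf{x}(z_i)-\mathbf{x}(z_{i+k})|\le z_{i+k}-z_i$ (with the cyclic extension $z_{i+N}=z_i+1$). Indexing each unordered pair once as $\{i,i+k\}$ with $k\in\{1,\ldots,\lfloor N/2\rfloor\}$ (adjusting by a factor $1/2$ at $k=N/2$ when $N$ is even), one obtains
\[
E(\mathbf{Z})\ge\frac{1}{s}N^{-s-1}\sum_{k=1}^{\lfloor N/2\rfloor}\sum_{i=1}^{N}(z_{i+k}-z_i)^{-s}.
\]
For each fixed $k$, $\sum_i(z_{i+k}-z_i)=k$ because every gap $d_j=z_{j+1}-z_j$ appears exactly $k$ times. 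Convexity of $t\mapsto t^{-s}$ together with Jensen's inequality then gives $\sum_i(z_{i+k}-z_i)^{-s}\ge N(k/N)^{-s}=N^{s+1}k^{-s}$, and summing over $k$ yields $E(\mathbf{Z})\ge(1/s)\sum_{k=1}^{\lfloor N/2\rfloor}k^{-s}\to\zeta(s)/s$, hence $\liminf_N\min E\ge\zeta(s)/s$.

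For equidistribution \eqref{disgen}, the key is to quantify the Jensen defect at $k=1$. Writing $d_i^*=1/N+\epsilon_i$ with $\sum_i\epsilon_i=0$, strict convexity of $t\mapsto t^{-s}$ yields (via Taylor with remainder) a lower bound $\sum_i(d_i^*)^{-s}-N^{s+1}\ge c\, N^{s+2}\sum_i\epsilon_i^2$ for gaps comparable to $1/N$. Combined with $E(\mathbf{Z}^*)-\zeta(s)/s=o(1)$ this forces $\sum_i\epsilon_i^2=o(1/N)$, and Cauchy--Schwarz then delivers $\sum_i|\epsilon_i|=o(1)$. The \textbf{main obstacle} is handling configurations with a few macroscopically large gaps, where the Taylor expansion around $1/N$ breaks down: for $d_i^*\gg 1/N$ the Jensen defect grows linearly as $\sim sN^{s+1}d_i^*$, which after the $N^{-s-1}/s$ normalization costs $\sim d_i^*$ in the energy gap. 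Consequently the total $\ell^1$ mass of such large gaps is itself controlled by the vanishing energy excess, and the Taylor/Cauchy--Schwarz argument then closes on the complementary small-gap set.
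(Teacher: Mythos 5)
The paper does not itself prove this theorem --- it attributes it to Martinez--Finkelstein, Maymeskul, Rakhmanov, and Saff \cite{MMRS} --- but Section 7 reconstructs the lower bound and equidistribution machinery (Lemmas \ref{EsLem}, \ref{lem_EZ2}), and Lemma \ref{lem_E} gives the two-sided bound for $C^4$ closed curves, so there is something concrete to compare against. Your lower bound is essentially Lemma \ref{EsLem}: dominating chord by arc, grouping the pair sum by offset $k$, noting that for a closed curve $\sum_i(z_{i+k}-z_i)=k$, and applying Jensen to get $\tilde E^k(\bZ)\ge s^{-1}k^{-s}$, then summing to $s^{-1}\zeta(s;N)\to\zeta(s)/s$. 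Your equidistribution argument (Taylor of $W$ around $1/N$ plus Cauchy--Schwarz on the defect) is the same idea as Lemma \ref{lem_EZ2}, but the paper sidesteps your ``main obstacle'' entirely: it applies the second-order Taylor formula with Lagrange remainder exactly, then only lower-bounds $W''(\xi_i)\ge (s+1)N^{s+2}$ over the indices with $d_i<1/N$ (where $\xi_i<1/N$ makes the bound automatic, regardless of how small $d_i$ is), and finally uses the zero-sum constraint $\sum_i(d_i-1/N)=0$ to conclude $\sum_i|d_i-1/N|=2\sum_{d_i<1/N}|d_i-1/N|$. No two-regime analysis of ``macroscopically large gaps'' is needed; the large-gap terms simply contribute a nonnegative quantity and can be discarded.

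The genuine gap in your proposal is the upper bound. The theorem is stated for \emph{rectifiable} Jordan arcs or closed curves, but your estimate $|\mathbf{x}(z+h)-\mathbf{x}(z)|=h(1+O(h^2))$ comes from a Taylor expansion of $\mathbf{x}$ that uses $\mathbf{x}'\cdot\mathbf{x}''=0$, hence requires $\mathbf{x}\in C^2$ (the paper's Lemma \ref{lem_appr} assumes $C^4$). For a merely rectifiable curve in arc-length parametrization, all you know is $|\mathbf{x}(z)-\mathbf{x}(z')|\le |z-z'|$ and, by Lebesgue differentiation of the metric derivative, that $|\mathbf{x}(t+h)-\mathbf{x}(t)|/|h|\to 1$ as $h\to 0$ for a.e.\ $t$ --- with no uniform rate, and possibly failing on a measure-zero set (corners, cusps). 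So the pointwise $O(h^2)$ chord-versus-arc estimate, and the resulting uniform threshold split at $K_N$, both break down. To close the upper bound at the stated generality one needs a measure-theoretic argument (e.g.\ fix $\eta>0$, use Egorov to get a large set on which the ratio is uniformly within $\eta$ of $1$ for $h$ small, place equally-spaced points and show the contribution of bad indices is negligible), as done in \cite{MMRS}. In the $C^4$ setting of the rest of the paper your argument is fine and mirrors Lemma \ref{lem_E}, but as a proof of Theorem~\ref{littlepoppy} as stated it has a hole.
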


This theorem together with its refinement \cite{Bor2012}, which is one of the main motivations for the present work, is a special case 
of the so-called \emph{Poppy-seed bagel theorem} (see  \cite{BHS2019}) which applies to general $d$-rectifiable manifolds embedded in $\mathbb{R}^p,\, d \leq p.$

As stated in Theorem~\ref{littlepoppy},  any minimizer of the energy $E$ defined in \eqref{EDef} has to be almost uniformly distributed. This paper studies the large time behavior of \eqref{eq_si}; namely, whether $\{z_i(t)\}_{i\in\mathbb{Z}}$ are ``close to equally spaced'' as $t\rightarrow\infty$.

\section{Main results}

We will use the following quantities depending on $s$:
\begin{equation}
\cs  := \sum_{i=1}^\infty i^{-s},\quad \tcs  := \frac{\cs }{s}.
\end{equation}

Every constant $C$ or $c$ appearing in this paper   depends only on $s$ and the curve $\bx( z)$, if not stated otherwise.

\subsection{Statement of main results}

Our first main result is the following.
\begin{theorem}\label{thm1}
Let $\bx(z)$ be a non-self-intersecting $C^4$ closed curve, and let $s>1$.  For any $\epsilon>0$, there exists  $N_0$, depending on $\epsilon,s$ and the curve $\bx( z)$, such that the following holds for $N>N_0$: for the solution to \eqref{eq_si} with initial data satisfying \eqref{sisort}, there exists a positive constant $C$ such that 
\begin{equation}
E(t) \le \tcs (1+\epsilon) ,\quad \forall t \ge \frac{C}{\epsilon}.
\end{equation}
\end{theorem}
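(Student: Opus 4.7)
I would establish a Lojasiewicz-type differential inequality $-\dot E(t)\ge c\,\epsilon$ whenever $E(t)>\tcs(1+\epsilon)$ and $N\ge N_0(\epsilon)$, and integrate it. Combined with the monotonicity from \eqref{eq0E}, this yields $E(t)\le \tcs(1+\epsilon)$ for every $t\ge (E(0)-\tcs(1+\epsilon))/(c\epsilon)$, which is $\le C/\epsilon$ once $C$ is allowed to depend on the initial energy and the curve. The heart of the matter is producing this dissipation lower bound via two matched convexity estimates on the nearest-neighbor gaps $d_i:=z_{i+1}-z_i$.

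For the first estimate, a sharp lower bound on $E$, I would use that $\bx$ is $C^4$ and arc-length parametrized, so $|\bx(z_{i+k})-\bx(z_i)|=(z_{i+k}-z_i)(1+O((z_{i+k}-z_i)^2))$ for nearby indices. Grouping pairs by index separation $k$ and using $s>1$ to truncate the $k$-sum with negligible tail yields
\begin{equation*}
  E(\bZ) \;\ge\; \frac{1}{s\,N}\sum_{i=1}^N\sum_{k=1}^K \Bigl(\sum_{j=0}^{k-1}Nd_{i+j}\Bigr)^{-s} - o_N(1).
\end{equation*}
The right-hand side is a symmetric strictly convex function of $(Nd_i)$ on the simplex $\{Nd_i\ge 0,\sum_i Nd_i=N\}$, minimized at $Nd_i\equiv 1$ with value $\tcs$; quantitative convexity then gives $E(\bZ)-\tcs\ge c\,\Delta(\bZ)-o_N(1)$ for a coercive defect $\Delta(\bZ)$ measuring non-uniformity of the gaps (for example $\frac{1}{N}\sum_i(Nd_i-1)^2$).

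For the second estimate, a matching lower bound on the dissipation, I would expand $\nabla W(\bv)=-|\bv|^{-s-2}\bv$ and project on $\bx'(z_i)$: the nearest neighbors $j=i\pm 1$ produce the leading piece $\dot z_i\approx N^{-s}(d_{i-1}^{-s-1}-d_i^{-s-1})$, with the $k\ge 2$ tail controlled by $s>1$. Recognizing this (after rescaling $u_i=Nd_i$) as the constrained discrete gradient of the same convex functional as above, a quantitative convexity/coercivity argument converts $\frac{1}{N}\sum_i|\dot z_i|^2$ into a lower bound on the energy excess: $\frac{1}{N}\sum_i |\dot z_i|^2 \ge c\,(E(\bZ)-\tcs)_+-o_N(1)$, whence $-\dot E\ge c\,\epsilon$ on the set $\{E>\tcs(1+\epsilon)\}$ for $N\ge N_0(\epsilon)$.

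The main obstacle will be the two quantitative convexity estimates, uniform in $N$. Both the geometric curvature corrections from $\bx$ and the longer-range interactions $k\ge 2$ perturb the clean nearest-neighbor model, and one must show that these perturbations are dominated by the strict convexity of the leading term rather than destroying it. The hyper-singularity $s>1$ enters twice: it provides convergence of the $k$-series so that the nearest-neighbor reduction describes the full energy accurately, and it gives strong convexity of $d\mapsto d^{-s}$ in the relevant range. A secondary technical point is an a priori $L^\infty$ bound on $\max_i(Nd_i)$ and on $\max_i (Nd_i)^{-1}$ that validates the Taylor expansion uniformly; such a bound should follow from finiteness of $E(0)$ together with the length constraint $\sum_i d_i=1$, possibly after a short initial time during which the singular repulsion spreads out any pair that started too close.
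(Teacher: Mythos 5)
Your high-level strategy---lower-bound the dissipation when $E$ is above threshold and integrate---is also the engine behind the paper's proof, but there are two concrete gaps that prevent the plan from delivering what the theorem actually asserts.

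First, the time bound. In your integration you arrive at a waiting time $T\lesssim (E(0)-\tcs(1+\epsilon))/(c\epsilon)$ and then remark that this is $\le C/\epsilon$ ``once $C$ is allowed to depend on the initial energy.'' But the paper's standing convention (stated in Section~2) is that $C$ depends only on $s$ and the curve, and the content of the theorem really does require a time bound uniform over initial data: $E(0)$ can be arbitrarily large for fixed $N$ (e.g.\ two particles initially at distance $N^{-10}$ give $E(0)\sim N^{9s-1}$). A Lojasiewicz inequality of the form $-\dot E\ge c\,\epsilon$ on $\{E>\tcs(1+\epsilon)\}$ can never remove this dependence. The paper removes it by establishing a \emph{super-linear} dissipation inequality in the high-density regime: working with the Lyapunov functional $F=E+\rho_M^s$ (with $\rho_M=1/(N\delta)$ the reciprocal nearest-neighbor gap), one obtains $\dot F\le -cF^{(s+1)/s}$, an ODE whose solutions fall to an $O(1)$ level within a universal time, regardless of $F(0)$. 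Only after this ``spreading'' phase does one switch to a quadratic inequality $\dot{\tilde F}\le -c\tilde F^2$ which produces the $1/t$ rate and the $C/\epsilon$ bound. Your closing sentence about ``a short initial time during which the singular repulsion spreads out any pair that started too close'' identifies the issue but offers no mechanism for a universal time bound; that mechanism is precisely the super-linear exponent $(s+1)/s>1$, and it is where the hypersingularity $s>1$ does its second job.

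Second, the coercivity estimate $\frac{1}{N}\sum_i|\dot z_i|^2\ge c\,(E-\tcs)_+-o_N(1)$, derived from ``quantitative convexity of the nearest-neighbor model,'' is doing a great deal of unexamined work and is unlikely to go through as stated. The paper explicitly flags non-convexity of $E$ (for non-convex curves) and non-uniqueness of critical points as the central difficulty, so a direct simplex-convexity argument on the gaps $u_i=Nd_i$ is suspect. There are three independent obstructions: (i) the interactions for index separation $k\ge 2$ are not a small perturbation of the nearest-neighbor model---the quantity that controls the dynamics across a cut is the full long-range sum $P_k=\sum_{i\le k<j}(x_j-x_i)^{-s-1}$, and the whole tail contributes at leading order; (ii) the velocity $\dot z_i$ is not a gradient in the $d_i$ variables (one has $\dot d_i=\dot z_{i+1}-\dot z_i$, a nonlocal differential relation), so ``constrained discrete gradient of the same convex functional'' needs justification; (iii) curvature corrections enter the expansion of $\nabla W(\bx(z_i)-\bx(z_j))\cdot\bx'(z_i)$ at order $|z_i-z_j|^{-s+1}$, and because $s$ may be only slightly greater than $1$ these are barely summable and must be tracked carefully. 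The paper replaces your convexity step by two specific lemmas: a min-max argument (Lemma~\ref{lem_M2}) producing a cut of small total repulsion $P_{i_S}\le(1+\epsilon)\cs N^{s+1}$, and a closest-pair dichotomy (Lemma~\ref{lem_close}) giving a cut of large total repulsion near $i_M$, together with a Cauchy--Schwarz step that converts the net flux through the arc between $i_M$ and $i_S$ into a lower bound on $\sum_i|\dot z_i|^2$. That route is combinatorial/topological rather than convexity-based and is what actually survives the non-convexity of the curve.

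In short: your outline captures the right shape (dissipation $\Rightarrow$ relaxation time) but misses the super-linear ODE needed for initial-data independence, and replaces the paper's total-repulsion-cut machinery with a convexity argument that does not obviously survive long-range interactions, curvature, and the non-convexity of $E$.
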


This theorem quantifies the convergence rate of the solution to \eqref{eq_si} to an almost minimal energy state. In fact, since Lemma \ref{lem_E} shows that the global minimum of $E$ is at least $\tcs (1-\epsilon)$, Theorem \ref{thm1} shows that, after time $\calO(1/\epsilon)$, the energy will decay to the global minimum up to an error of $\calO(\epsilon)$. This can be viewed as an energy decay rate of $\calO(1/t)$ being \emph{independent of} the number of particles $N$, as long as $N$ is large enough. 

Our second main result shows that upper bounds on the energy of  $N$-point configurations  such as provided by Theorem~\ref{thm1}  impose  geometrical constraints on the distribution of these configurations showing that they are near optimal configurations.
 \begin{theorem}\label{thm2}
For given $\epsilon>0$ and $s>1$, there is some $N_0$   depending on $s$ and $\epsilon$  such that if  $N>N_0$ and  $\bZ=\{z_i\}_{i=1}^N$ satisfies
\begin{equation}
E(\bZ) \le \tcs (1+\epsilon),
\end{equation}
then the mean absolute deviation of  $d_i=z_{i+1}-z_i$, $i=1,2,\ldots, N$, satisfies
\begin{equation}\label{Zmad1}
\frac{1}{N}\sum_{i=1}^N \left |d_i-\frac{1}{N}\right |\le 2\left(\frac{2\tcs}{s+1}\right)^{1/2}\frac{\epsilon^{1/2}}{N},
\end{equation}
and for all $a\in \mathbb{R}$ and  $0<L<1$, we have
\begin{equation}\label{disc1}
  \left|\frac{\#\{i:\,[z_i,z_{i+1})\subset [a,a+L)\}}{N} - L\right|\le \left[L(1-L)\tcs\right]^{1/2}(2\epsilon)^{1/2}.\end{equation}
Consequently, under the assumptions of Theorem~\ref{thm1}, the conclusions \eqref{Zmad1} and \eqref{disc1} hold for $N$ sufficiently large and $t\ge C/\epsilon$.
\end{theorem}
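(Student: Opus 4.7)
The plan is to control both deviation measures via convexity of $y\mapsto y^{-s}$ applied to the gap sequence $d_i = z_{i+1}-z_i$. The central intermediate step is an upper bound on $\sum_i d_i^{-s}$ extracted from the assumption $E\le\tcs(1+\epsilon)$. Split the energy as $E = E_{\mathrm{NN}} + E_{\mathrm{rest}}$, where the nearest-neighbor part is $E_{\mathrm{NN}} := (sN^{s+1})^{-1}\sum_i |\bx(z_i)-\bx(z_{i+1})|^{-s}$ and $E_{\mathrm{rest}}$ collects all other pairs. Because the chord is at most the arc, $E_{\mathrm{NN}} \ge (sN^{s+1})^{-1}\sum_i d_i^{-s}$. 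Grouping pairs by circular separation $k$ and using $\sum_i(z_{i+k}-z_i)=k$, Jensen's inequality applied to the convex $y\mapsto y^{-s}$ (together with chord $\le$ arc) gives each $k$-th neighbor contribution at least $1/(sk^s)(1-o(1))$, so $E_{\mathrm{rest}}\ge(\tcs-1/s)(1-o(1))$. Combined with $E\le\tcs(1+\epsilon)$, this yields
\begin{equation*}
  \sum_{i=1}^N d_i^{-s} \le N^{s+1}\bigl(1+s\tcs\epsilon+o(1)\bigr).
\end{equation*}

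To prove \eqref{Zmad1}, set $u_i := d_i-1/N$ and $h(y) := y^{-s}-N^s+sN^{s+1}(y-1/N)$, so that $h(1/N)=h'(1/N)=0$ and $h''(y)=s(s+1)y^{-s-2}\ge s(s+1)N^{s+2}$ on $y\le 1/N$. Since $\sum_i u_i=0$, the display above becomes $\sum_i h(d_i)=\sum_i d_i^{-s}-N^{s+1}\le sN^{s+1}\tcs\epsilon(1+o(1))$, while Taylor's theorem with Lagrange remainder gives $h(d_i)\ge\frac{s(s+1)N^{s+2}}{2}u_i^2$ on $U_- := \{i: d_i\le 1/N\}$. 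Hence $\sum_{U_-}u_i^2\le\frac{2\tcs\epsilon}{(s+1)N}(1+o(1))$. By Cauchy--Schwarz, $\sum_{U_-}|u_i|\le\sqrt{|U_-|\sum_{U_-}u_i^2}\le\sqrt{2\tcs\epsilon/(s+1)}(1+o(1))$, and the identity $\sum_i u_i=0$ forces $\sum_{U_-}|u_i|=\tfrac12\sum_i|u_i|$. Dividing by $N$ then gives \eqref{Zmad1}.

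For \eqref{disc1}, I refine the lower bound on $\sum_i d_i^{-s}$ by partitioning the $N$ NN intervals according to whether $[z_i,z_{i+1})$ lies inside $[a,a+L)$, inside its complement, or straddles the boundary (at most two of the latter). With $J := J(a,L) = \#\{i: z_i\in[a,a+L)\}$ and $p := J/N$, there are $J-1$ interior intervals of total length $\le L$ and $N-J-1$ exterior intervals of total length $\le 1-L$, so Jensen's inequality applied separately on each block yields
\begin{equation*}
  \sum_i d_i^{-s}\ge\frac{(J-1)^{s+1}}{L^s}+\frac{(N-J-1)^{s+1}}{(1-L)^s} = N^{s+1}f(p,L)(1-o(1)),
\end{equation*}
where $f(p,L) := p^{s+1}/L^s+(1-p)^{s+1}/(1-L)^s$. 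Combined with the earlier upper bound, $f(p,L)-1\le s\tcs\epsilon+o(1)$. Since $f(L,L)=1$, $\partial_p f(L,L)=0$, and $\partial_p^2 f(L,L)=s(s+1)/(L(1-L))$, Taylor's theorem yields $f(p,L)-1\ge\frac{s(s+1)(p-L)^2}{2L(1-L)}$ near $p=L$, so $(p-L)^2\le\frac{2L(1-L)\tcs\epsilon}{s+1}(1+o(1))\le 2L(1-L)\tcs\epsilon(1+o(1))$. As $\#\{i:[z_i,z_{i+1})\subset[a,a+L)\} = J(a,L)-1$, the estimate transfers to $I(a,L)/N$ modulo an $O(1/N)$ correction absorbed in $o(1)$ for $N$ large, giving \eqref{disc1}.

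The main obstacle is the local nature of the Taylor quadratic lower bound for $f(p,L)-1$: one must either extend it globally using the convexity of $f$ in $p$, or treat $|p-L|$ not small separately by observing that $f\gg 1$ in that regime already contradicts the upper bound $f-1\le s\tcs\epsilon$. A secondary technical point is the uniform control of the $o(1)$ corrections coming from the chord-vs-arc discrepancy in the Jensen bounds for higher neighbors $A_k$, which uses the $C^4$-regularity of $\bx(z)$ to ensure $|\bx(z_i)-\bx(z_{i+k})|=(z_{i+k}-z_i)(1+O((k/N)^2))$ uniformly over moderate $k/N$; boundary cases $p$ close to $0$ or $1$ (equivalently $L$ near the edges) also require a separate treatment to keep $J-1\ge 0$ meaningful in the Jensen step.
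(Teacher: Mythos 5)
Your proposal follows essentially the same route as the paper's Section~7: use the chord-$\le$-arc inequality $|\bx(z_{i+k})-\bx(z_i)|\le z_{i+k}-z_i$ to reduce to the flat configuration, peel off the nearest-neighbour energy, bound the $k$-th neighbour contribution from below by Jensen to get $\tilde E^k\ge s^{-1}k^{-s}$, and deduce a sharp upper bound on $\sum_i d_i^{-s}$; then Taylor at $1/N$ plus Cauchy--Schwarz for \eqref{Zmad1}, and block-Jensen for \eqref{disc1}. Two remarks.

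First, a correctable misconception in your technical discussion: the chord-vs-arc step introduces \emph{no} $o(1)$ correction and requires no $C^4$ Taylor expansion of $\bx$. Because you need lower bounds on each energy term, the one-sided inequality $|\bx(z_{i+k})-\bx(z_i)|^{-s}\ge(z_{i+k}-z_i)^{-s}$ is already in the useful direction, exactly. The $o(1)$ in your intermediate bound $\sum_i d_i^{-s}\le N^{s+1}(1+s\tcs\epsilon+o(1))$ comes solely from the zeta-truncation $\zeta(s;N)=\sum_{k\le\lfloor(N-1)/2\rfloor}k^{-s}<\zeta(s)$ when summing the $k$-th neighbour contributions, not from the curve geometry; that error is then absorbed by taking $N_0$ large, as you say.

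Second, the gap you flag in the \eqref{disc1} argument is real: the Lagrange-remainder lower bound $f(p,L)-1\ge\frac{s(s+1)}{2L(1-L)}(p-L)^2$ is only local. Indeed $\partial_p^2 f(\xi,L)=s(s+1)\bigl(\xi^{s-1}/L^s+(1-\xi)^{s-1}/(1-L)^s\bigr)$ is not minimized at $\xi=L$ in general, so for $\xi$ in the interval between $p$ and $L$ one may have $\partial_p^2 f(\xi,L)<\partial_p^2 f(L,L)$ and the claimed quadratic lower bound fails. The paper avoids this entirely: after the block-Jensen step it writes the lower bound as $\alpha(\alpha/L)^s+(1-\alpha)\bigl((1-\alpha)/(1-L)\bigr)^s$, uses convexity of $x\mapsto x^s$ to collapse this to $\bigl(1+\frac{(\alpha-L)^2}{L(1-L)}\bigr)^s$, and then applies $(1+x)^s\ge 1+sx$ for $x\ge0$ (valid for $s>1$), yielding the \emph{global} quadratic lower bound $1+\frac{s}{L(1-L)}(\alpha-L)^2$ for all $\alpha\in[0,1]$. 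Substituting that chain of inequalities for your local Taylor step closes the gap cleanly and produces precisely the constant appearing in \eqref{disc1}.
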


The proof of Theorem~\ref{thm1} is given in 
Sections 3-6. Below we discuss the motivation
for the argument used in its proof.  The proof of Theorem~\ref{thm2} is given in Section 7.

\subsection{Outline of the proof of Theorem~\ref{thm1}}

It is known that the global minimizer of $E$ defined in \eqref{EDef} converges to the uniform distribution as $N\rightarrow\infty$; therefore it is natural to expect that, for large $N$, the gradient flow \eqref{eq_si}  converges to some limiting configuration which is nearly equally distributed. However, we encounter the following difficulties:
\begin{itemize}
\item When the curve $\bx( z)$ is not convex, the energy $E$ is not necessarily a convex function of $\{ z_i\}$.
\item The global minimizer of $E$ may not be unique, and there may be local minimizers and saddle points.
\end{itemize}

To handle these difficulties, we manage to extract some ideas from the mean field limit of \eqref{eq_si}. In fact, it is proved in \cite{OE} that the analog of \eqref{eq_si} on the real line has the porous medium equation
\begin{equation}\label{porous}
\partial_t \rho = \cs\partial_{zz}(\rho^{s+1})
\end{equation}
as its mean field limit, under certain assumptions on the initial data. This mean field limit can be understood intuitively as follows:
\begin{itemize}
\item Due to the fast decay of $W(\bx)$ for large $|\bx|$, the particle interaction is \emph{localized} when $N$ is large, meaning that typically the interaction between particles with large distances can be neglected, at least for a fixed time interval $[0,T]$. The same holds for the curvature effect, i.e., the difference between  \eqref{eq_si} and its analog on the real line.
\item Due to the strong localized repulsion, particles tend to distribute \emph{locally} in a uniform way, similar to the local equilibrium in kinetic theory. This means, in a short interval $I$ of length $\delta$ (which is still long enough to contain a large number of particles), the particles are approximately uniformly distributed. However, the particle density may still have variation on a macroscopic scale, according to some density profile $\rho(t,z)$.
\item In a short interval $I$ of length $\delta$, if the particles inside are uniformly distributed with density $\rho$ (i.e., the distance between adjacent particles is approximately $1/(N\rho)$, and the total number of particles inside is approximately $\delta N \rho$), then the total energy of the particles inside is approximately
\begin{equation}
N^{-s-1}\sum_{z_i\in I} \sum_{j\ne i} \frac{|z_i-z_j|^{-s}}{s} \approx N^{-s-1} (\delta N\rho)\cdot \sum_{j\in\mathbb{Z},j\ne 0} \frac{|j/(N\rho)|^{-s}}{s} = 2\tcs \rho^{s+1}\delta.
\end{equation}
Summing   all the short intervals (and symmetrizing in $i$ and $j$), this gives a Riemann sum which approximates
\begin{equation}
E(\bZ) \approx \tcs \int \rho^{s+1}\rd{z}.
\end{equation}
Then notice that \eqref{eq_si} is the gradient flow of $E$, while \eqref{porous} is exactly the Wasserstein-2 gradient flow of the above right-hand side [RHS].
\end{itemize}

Although  mean field limits are generally not true on the whole time axis $[0,\infty)$, we can indeed get some ideas from the energy structure of \eqref{porous}. To motivate the proof of Theorem \ref{thm1}, we start from the following two properties of the porous medium equation \eqref{porous}:
\begin{itemize}
\item Suppose at time $t$, there are two points $z_M$ and $z_S$ such that $\rho(t,z_M)>\rho(t,z_S)$ (assuming $z_M<z_S$ without loss of generality). Then
\begin{equation}\label{porous1}
\int_{z_M}^{z_S} \Big(-\frac{s+1}{s}\cs\partial_z(\rho^s)\Big)\cdot \rho(t,z)\rd{z} = \cs(\rho(t,z_M)^{s+1}-\rho(t,z_S)^{s+1})> 0,
\end{equation}
where the term $-\frac{s+1}{s}\cs\partial_z(\rho^s)$ is the transport velocity of the porous medium equation, by writing $\partial_{zz}(\rho^{s+1}) = \frac{s+1}{s}\partial_z (\rho\partial_z(\rho^s))$. This means that we have a lower bound on the energy dissipation rate:
\begin{equation}
\begin{split}
\frac{\rd}{\rd{t}} \int\rho^{s+1}\rd{z} & = -\frac{s+1}{s}\cs\int |\partial_z(\rho^s)|^2\rho\rd{z} \\
 & \le -\frac{s+1}{s}\cs\cdot\frac{\left(\int (-\partial_z(\rho^s))\rho\rd{z}\right)^2}{\int \rho\rd{z}}.
 \end{split}
\end{equation}
Since the total amount of energy is finite, $|\rho(t,z_M)-\rho(t,z_S)|$ will eventually get small after a long time. In particular, for some large $T$, $\sup_z \rho(T,z)$ will get close to the average density $\int \rho\rd{z}/\int \rd{z}$.

\item The porous medium equation \eqref{porous} obeys the maximum principle:
\begin{equation}
\sup_z \rho(t,z) \text{ is decreasing in }t.
\end{equation}
This means that, once $\sup_z \rho(T,z)$ gets close to the average density, it cannot become large again, which means $\rho(t,z)$ will be close to a uniform distribution for all $t\ge T$.
\end{itemize}

To prove Theorem \ref{thm1}, we aim to find the analogues of the above two properties for \eqref{eq_si}:
\begin{itemize}
\item In the case of a flat $\mathbb{T}$, we prove Lemma \ref{lem_M2} as the counterpart of the first property. It says, once we have an interval in which the `density' (number of particles divided by interval length)  is small, then we can find a place to cut the interval, such that the total repulsion force between left and right is small. This concept of `total repulsion force' is the counterpart of the term $\rho(t,z_S)^{s+1} $ in \eqref{porous1}.

\item We establish Lemma \ref{lem_close} as the counterpart of the second property. It says that the distance $\delta$ between the closest pair of particles basically cannot decrease (see \eqref{lem_close0_1}, whose RHS is $o(1)$), in correspondence to the decreasing property. Furthermore, for reasonable situations, we have the lower bound \eqref{lem_close_2} for the `total repulsion force' at this closest pair of particles, serving as the counterpart of the term $\rho(t,z_M)^{s+1} $ in \eqref{porous1}.
\end{itemize}

Finally, we have to deal with the finite-$N$ effect and the curvature effect from $\bx(z)$, which may produce errors to the above two properties. Therefore, we need to keep track of the $N$-dependence of error terms, as well as using the smoothness of curve $\bx(z)$, to show that all such error terms are small enough.

\section{Lemmas on total repulsion cut}

For a given set of points $x_0<\dots<x_N \in\mathbb{R}$, we define the {\it total repulsion} of the {\it cut} at $x_k,x_{k+1}$ by
\begin{equation}
P_k = P_k(x_0,\dots,x_N) := \sum_{i,j:\, 0\le i\le k<j\le N} (x_j-x_i)^{-s-1}
\end{equation}

The main purpose of this section is to prove the following lemma:
\begin{lemma}\label{lem_M2}
For any $0<\epsilon\le 0.01$, there exists $N_0=N_0(\epsilon)$  such that if $N>N_0$, then for any $0= x_0 < \cdots < x_N = 1$  there exists an index $i_S$ such that
$(x_{i_S},x_{i_S+1})\bigcap (\epsilon_1,1-\epsilon_1) \ne \emptyset$ with $\epsilon_1 = \frac{\epsilon}{3(1+s)}$, and
\begin{equation}\label{lem_M2_1}
P_{i_S} \le (1+\epsilon) \cs  N^{s+1}.
\end{equation}
\end{lemma}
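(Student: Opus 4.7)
The plan is an averaging argument over a carefully chosen subset of interior cuts. Define the step function $F:(0,1)\to\mathbb{R}$ by $F(y):= P_{k(y)}$, where $k(y)$ is the index with $y\in[x_{k(y)},x_{k(y)+1})$. Finding $i_S$ reduces to exhibiting some $y^\ast\in(\epsilon_1,1-\epsilon_1)$ with $F(y^\ast)\le(1+\epsilon)\zeta(s)N^{s+1}$ and setting $i_S:=k(y^\ast)$; the required intersection $(x_{i_S},x_{i_S+1})\cap(\epsilon_1,1-\epsilon_1)\ne\emptyset$ is then witnessed by $y^\ast$ itself. The natural starting point is the Fubini identity $\int_0^1 F(y)\,\rd y \,=\, \sum_{0\le i<j\le N}(x_j-x_i)^{-s}$, which links an averaged version of $F$ to a total pair-energy-like sum.

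To localize the argument, fix a threshold $\tau$ of order $1/N$ (to be tuned) and let $G := \{y\in(\epsilon_1,1-\epsilon_1):\,d_{k(y)}\ge\tau\}$ with $d_k:=x_{k+1}-x_k$. Since the gaps shorter than $\tau$ have total length at most $N\tau$, one has $|G|\ge 1-2\epsilon_1-N\tau$. Interchanging the sum defining $F$ with the integral over $G$, and observing that any pair $(i,j)$ with $x_j-x_i<\tau$ contributes nothing to $\int_G F$ (because every intermediate gap is then shorter than $\tau$ and hence disjoint from $G$), one arrives at the key inequality
\[
\int_G F(y)\,\rd y \;\le\; \sum_{i<j:\,x_j-x_i\ge\tau}(x_j-x_i)^{-s}.
\]

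The heart of the proof is to show that the right-hand sum is at most $(1+\epsilon/2)\zeta(s)N^{s+1}$ uniformly over configurations. I would proceed via Jensen's inequality for the convex function $t\mapsto t^{-s}$, which gives $(x_j-x_i)^{-s}\le (j-i)^{-s-1}\sum_{l=i}^{j-1}d_l^{-s}$; swapping summations in $(i,j,l)$ reduces the problem to controlling $\sum_l d_l^{-s}$ under the constraint $\sum_l d_l=1$, with the size restriction on $x_j-x_i$ used to truncate the small-gap contributions. The power-mean inequality shows that $\sum_l d_l^{-s}$ is minimized at uniform spacing $d_l=1/N$, yielding the target bound up to $(1+\epsilon/2)$; the calibration $\epsilon_1=\epsilon/(3(1+s))$ is tuned precisely so that the cumulative losses absorb into $\epsilon$. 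Once the estimate is in place, the mean-value principle supplies $y^\ast\in G$ with $F(y^\ast)\le|G|^{-1}\int_G F\le (1+\epsilon)\zeta(s)N^{s+1}$ for $N$ large enough, completing the argument.

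The principal obstacle is the third step: the raw Jensen bound $\sum(x_j-x_i)^{-s}\le\zeta(s)\sum_l d_l^{-s}$ can blow up in the presence of tiny nearest-neighbor gaps $d_l\ll\tau$, and the restriction $x_j-x_i\ge\tau$ does not automatically force all individual $d_l\ge\tau$. To reach the sharp constant, one likely needs either a dyadic/multi-scale decomposition of the restricted sum or a case split handling clustered configurations separately---e.g., if a cluster of many points sits in a short interval, one bypasses averaging by cutting directly at a surrounding large gap (which automatically lies in $(\epsilon_1,1-\epsilon_1)$ for such configurations) and estimates $P_k$ directly using that all straddling pairs have distance bounded below by that large gap.
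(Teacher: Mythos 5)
The averaging argument at the heart of your proposal does not work, and the obstacle you identify at the end is more fundamental than a calibration issue; it cannot be repaired by dyadic decomposition or a case split within the same framework.

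The core problem is that you average $F(y)=P_{k(y)}$ over cuts of the \emph{given} configuration $\bX=(x_0,\dots,x_N)$. Your Fubini identity $\int_0^1 F = \cE(\bX)$ is correct, but $\cE(\bX)$ is \emph{not} uniformly $O(N^{s+1})$: for a configuration with $N/2$ equally spaced points at spacing $1/(2N)$ in $[0,1/4]$ and $N/2$ equally spaced points at spacing $3/(2N)$ in $[1/4,1]$, one has $\cE(\bX)\approx 2^{s-1}(1+3^{-s})\zeta(s)N^{s+1}$, which exceeds $(1+\epsilon)\zeta(s)N^{s+1}$ for every $s>1$ and small $\epsilon$. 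In this example \emph{every} gap equals either $1/(2N)$ or $3/(2N)$, so any threshold $\tau\le 1/(2N)$ removes nothing from $G$ and the restricted sum is the full energy $\cE(\bX)$; any threshold $\tau$ large enough to exclude the dense half removes a quarter of the mass, and then $|G|$ drops by a constant factor so the averaging loses a constant factor there instead. No choice of $\tau\sim 1/N$ can make $\tfrac{1}{|G|}\int_G F\le(1+\epsilon)\zeta(s)N^{s+1}$ hold uniformly. The lemma is still true in this example (a cut in the sparse half gives $P_k\approx(2/3)^{s+1}\zeta(s)N^{s+1}$), but the minimum of $F$ over $G$ is far below its average over $G$, so bounding the minimum by the average is too lossy. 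Your proposed Jensen step has the same wrong direction: $(x_j-x_i)^{-s}\le (j-i)^{-s-1}\sum_l d_l^{-s}$ bounds $\cE(\bX)$ \emph{above} by $\zeta(s)\sum_l d_l^{-s}$, but $\sum_l d_l^{-s}$ is \emph{minimized}, not maximized, at uniform spacing; it can be arbitrarily large, so this gives no upper bound.

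The paper's argument is genuinely different and is the idea you would need. It fixes the near-boundary points and considers the functional $F_m(\bX):=\min_{i_L\le k\le i_R-1}P_k(\bX)$ as a function of the interior points only. Lemma~\ref{lem_F} shows: (i) $F_m$ attains its \emph{maximum} at the unique configuration $X^*$ where all the $P_k$ are equal, and this $X^*$ is exactly the minimizer of the energy $\cE$; (ii) at $X^*$, the common value of the $P_k$ can be written as a weighted average related to $\cE(X^*)$ (formula \eqref{lem_F_2}). Thus for the given $\bX$ one has $\min_k P_k(\bX)=F_m(\bX)\le F_m(X^*)$, and $F_m(X^*)$ is then bounded by comparing $\cE(X^*)$ to $\cE$ of a near-uniform competitor. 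The averaging happens \emph{at the optimizer $X^*$}, where all the $P_k$ coincide and so the minimum equals the average, not at the given configuration where the average can be arbitrarily larger than the minimum. Replacing your averaging over cuts of $\bX$ by this min-max step is not a refinement of your argument but a replacement of it.
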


Notice that the total repulsion between two infinite sets of equally distributed points $\{\frac{i}{N}\}_{i=0}^\infty$ and $\{-\frac{j}{N}\}_{j=1}^\infty$ is
\begin{equation}
\sum_{i=0}^\infty \sum_{j=1}^\infty \Big(\frac{i+j}{N}\Big)^{-s-1} = N^{s+1} \sum_{i=1}^\infty i\cdot i^{-s-1} = \cs N^{s+1}.
\end{equation}
Therefore, Lemma \ref{lem_M2} tells us that one can find an index $i_S$  such that the total repulsion for $k=i_S$ there is at most slightly more than for equally distributed points.

The proof of this lemma follows a min-max type argument. Let $0\le i_L < i_R \le N$ be two indices. Define
\begin{equation}\label{F}\begin{split}
& F_m(x_{i_L+1},\dots,x_{i_R-1}) := \min_{i_L\le k \le i_R-1}P_k, \\
\end{split}\end{equation}
viewing those $x_i$'s with $i\le i_L$ or $i\ge i_R$ as fixed. $F_m$ are defined on
\begin{equation}
\begin{split}
\mathbb{R}&^{i_R-i_L-1}_{\textnormal{sort}} (x_{i_L},x_{i_R}) \\
 & = \{(x_{i_L+1},\dots,x_{i_R-1})\in\mathbb{R}^{i_R-i_L-1}: x_{i_L}< x_{i_L+1} < \cdots < x_{i_R-1}<x_{i_R}\},
 \end{split}
\end{equation}
which is a convex open set.

In the following lemma we describe the global maximum of $F_m$ as a function of $x_{i_L+1},\dots,x_{i_R-1}$.
\begin{lemma}\label{lem_F}
The global maximum of $F_m$ on $\mathbb{R}^{i_R-i_L-1}_{\textnormal{sort}}(x_{i_L},x_{i_R})$ is achieved at the same point $X^*= (x_{i_L+1}^*,\dots,x_{i_R-1}^*)$, which is the only point satisfying
\begin{equation}\label{lem_F_1}
P_{i_L} = \cdots = P_{i_R-1}.
\end{equation}
Furthermore, $X^*$ is the unique global minimizer of the energy functional
\begin{equation}
\cE(x_{i_L+1},\dots,x_{i_R-1}) := \sum_{i,j:\, 0\le i<j\le N} (x_j-x_i)^{-s},
\end{equation}
and 
\begin{equation}\label{lem_F_2}
F_m(X^*)  = \frac{1}{x_{i_R}-x_{i_L}}  \sum_{0\le i < j \le N,\, i< i_R ,\,j> i_L} (x_{\min\{j,i_R\}}^*-x_{\max\{i,i_L\}}^*) (x_j^*-x_i^*)^{-s-1},
\end{equation}
with $x_i^*:=x_i$ for $0\le i \le i_L$ or $i_R\le i \le N$.
\end{lemma}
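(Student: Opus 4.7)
The plan is to realize $X^*$ as the unique minimizer of the strictly convex energy $\mathcal{E}$, then identify it via the gradient condition with the unique point satisfying \eqref{lem_F_1}, and finally show by a convex-duality argument that it is the unique global maximizer of $F_m$. For convexity, each summand $(x_j - x_i)^{-s}$ is the composition of the strictly convex function $y \mapsto y^{-s}$ on $(0,\infty)$ with the affine map $X \mapsto x_j - x_i$, hence convex in $X$; strict convexity of $\mathcal{E}$ on $\mathbb{R}^{i_R - i_L - 1}_{\textnormal{sort}}(x_{i_L}, x_{i_R})$ follows because for each free $x_\ell$ the terms $(x_\ell - x_i)^{-s}$ with $i \le i_L$ contribute strictly positive diagonal entries with no mixed contributions to the Hessian. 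Coercivity is automatic since $\mathcal{E}(X) \to \infty$ whenever two adjacent particles collide, which together with strict convexity yields a unique interior minimizer $X^*$.

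A direct calculation gives
\[
\partial_{x_\ell}\mathcal{E} \;=\; s\!\left[\,\sum_{j > \ell}(x_j - x_\ell)^{-s-1} - \sum_{i < \ell}(x_\ell - x_i)^{-s-1}\right] \;=\; s\,(P_\ell - P_{\ell-1}),
\]
so $\nabla\mathcal{E}(X) = 0$ is equivalent to the chain of equalities in \eqref{lem_F_1}, and uniqueness of the minimizer identifies $X^*$ with the sole solution. Formula \eqref{lem_F_2} then follows from the telescoping identity
\[
\sum_{k = i_L}^{i_R - 1}(x_{k+1} - x_k)\,P_k(X) \;=\; \sum_{\substack{0 \le i < j \le N \\ i < i_R,\ j > i_L}}\!\bigl(x_{\min\{j, i_R\}} - x_{\max\{i, i_L\}}\bigr)(x_j - x_i)^{-s-1},
\]
obtained by writing $x_j - x_i = \sum_{k=i}^{j-1}(x_{k+1} - x_k)$ and exchanging the order of summation; evaluated at $X^*$, the left-hand side collapses to $F_m(X^*)\,(x_{i_R} - x_{i_L})$.

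The main obstacle is showing that $F_m$, being a minimum of convex functions and hence neither convex nor concave, achieves its global maximum only at $X^*$. I would use the sign structure $\partial_{x_\ell}P_k > 0$ for $k \ge \ell$ and $\partial_{x_\ell}P_k < 0$ for $k < \ell$ (immediate from the definition of $P_k$) together with a convex-duality argument. Let $X_m$ be any interior global maximizer of $F_m$, set $A := \{k : P_k(X_m) = F_m(X_m)\}$, and by Gordan's alternative pick multipliers $\lambda_k \ge 0$ supported on $A$, not all zero, with $\sum_k \lambda_k\,\nabla P_k(X_m) = 0$. Testing the $\ell = i_L + 1$ component, only $k = i_L$ contributes negatively, forcing $\lambda_{i_L} > 0$; symmetrically $\lambda_{i_R - 1} > 0$, so $\{i_L, i_R - 1\} \subseteq A$. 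Then $\phi(X) := \sum_k \lambda_k P_k(X)$ is convex and, since $\lambda_{i_L} > 0$ and $P_{i_L}$ has a diagonal positive-definite Hessian in the free coordinates (each $x_\ell$ appearing only through $(x_\ell - x_i)^{-s-1}$ with $i \le i_L$, with no cross terms), strictly convex. By construction $\nabla \phi(X_m) = 0$, so $X_m$ is the unique minimizer of $\phi$. Computing $\phi(X_m) = F_m(X_m)\sum_k \lambda_k$ and $\phi(X^*) = P^*\sum_k \lambda_k$, the inequality $\phi(X^*) \ge \phi(X_m)$ yields $P^* \ge F_m(X_m)$; combined with $F_m(X_m) \ge F_m(X^*) = P^*$ (since $X_m$ maximizes $F_m$), we obtain $\phi(X^*) = \phi(X_m)$, and strict convexity of $\phi$ forces $X_m = X^*$. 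Attainment of the supremum in the interior, which is needed for the $\phi$-argument to apply, rests on the observation that as any adjacent gap $x_{k+1} - x_k$ shrinks to zero only $P_k$ blows up while the other $P_j$'s remain bounded, so $F_m$ stays finite at the boundary; a standard compactness/deformation argument then rules out a boundary supremum exceeding $P^*$.
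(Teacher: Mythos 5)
Your proof is essentially correct but takes a genuinely different route through the hardest part of the lemma: the identification of the $F_m$-maximizer with $X^*$.

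Where the paper argues by direct perturbation (its Steps~1--2: at any boundary point, or at any interior point where the chain $P_{i_L}=\cdots=P_{i_R-1}$ fails, one can move a coordinate or spread a cluster to strictly increase $F_m$), you replace this with a convex-duality argument. Concretely: at an interior maximizer $X_m$, the one-sided directional derivatives of $F_m=\min_k P_k$ are $v\mapsto\min_{k\in A}\nabla P_k(X_m)\cdot v\le 0$, so Gordan's alternative produces nonnegative multipliers $\lambda_k$ supported on the active set $A$ with $\sum_k\lambda_k\nabla P_k(X_m)=0$; the sign structure $\partial_{x_\ell}P_k>0$ for $k\ge\ell$ and $<0$ for $k<\ell$ then forces $\lambda_{i_L}>0$, which makes the Lagrangian $\phi=\sum_k\lambda_k P_k$ strictly convex (the $P_{i_L}$ Hessian is diagonal and positive in the free coordinates); comparing $\phi(X_m)=F_m(X_m)\sum\lambda_k$ with $\phi(X^*)=P^*\sum\lambda_k$ squeezes $F_m(X_m)=P^*$ and then strict convexity forces $X_m=X^*$. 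This is a clean alternative that bypasses the case analysis of the paper's Step~2 and localizes the role of strict convexity precisely in $\lambda_{i_L}>0$. The remaining pieces (strict convexity and coercivity of $\cE$ to get a unique minimizer, the gradient computation $\partial_{x_\ell}\cE=s(P_\ell-P_{\ell-1})$ identifying \eqref{lem_F_1} with $\nabla\cE=0$, and the telescoping identity giving \eqref{lem_F_2}) coincide with the paper's Steps~3--4. Incidentally your sign for $\partial_{x_\ell}\cE$ is the correct one; the paper's remark that the quantity in its parenthesis equals $P_k-P_{k-1}$ is off by a sign, though this has no effect on the argument.

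There is one genuine soft spot. Your final sentence dismisses the interior-attainment issue with ``a standard compactness/deformation argument then rules out a boundary supremum exceeding $P^*$.'' The observation that $F_m$ extends continuously and finitely to the closure is correct, and it gives attainment of the maximum on the compact closure; but it does not by itself show the maximum is not attained at a boundary point, which is exactly what your Gordan/Lagrangian argument needs ($\nabla\phi(X_m)=0$ is an interior first-order condition). The needed deformation argument is precisely the paper's Step~1: at a boundary configuration with a coincident block $x_{k_1}=\cdots=x_{k_2}$, spread the cluster slightly; this strictly increases every $P_k$ with $k<k_1$ or $k\ge k_2$ (the finite ones), while the infinite $P_k$ with $k_1\le k<k_2$ stay irrelevant to the $\min$, so $F_m$ strictly increases. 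You should either supply this or invoke it explicitly rather than treating it as standard.
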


Notice that the RHS of \eqref{lem_F_2} is exactly $\cE(X^*)$ if $i_L=0,\,i_R=N$.

\begin{proof}

{\bf STEP 1}: Show that the global maximum of $F_m$ is achieved inside $\mathbb{R}^{i_R-i_L-1}_{\textnormal{sort}}(x_{i_L},x_{i_R})$.

In fact, one can extend the definition of $F_m$ to the closure of $\mathbb{R}^{i_R-i_L-1}_{\textnormal{sort}}(x_{i_L},x_{i_R})$ by interpreting $(x_j-x_i)^{-s-1}$ as infinity when $x_j=x_i$, and $F_m$ remains continuous. We show that the (global) maximum of $F_m$ on the closure of $\mathbb{R}^{i_R-i_L-1}_{\textnormal{sort}}(x_{i_L},x_{i_R})$ is not achieved at boundary. In fact, at any boundary point, one has either $x_{k_1-1}<x_{k_1}=x_{k_1+1}=\cdots=x_{k_2}<x_{k_2+1}$ for some $i_L< k_1<k_2 < i_R-1$, or $x_{i_L}=x_{i_L+1}$, or $x_{i_R}=x_{i_R-1}$. We show that maximum is not achieved in the first case, and the other cases can be handled similarly.

In the first case, by replacing $x_{k_1}$ and $x_{k_2}$ by $x_{k_1}-\delta$ and $x_{k_2}+\delta$ respectively, with $\delta>0$ small enough, we claim that $F_m$ is decreased. First of all, $P_k$ with $k_1\le k <k_2$ is much larger than $F_m$ if $\delta$ is small, and thus the minimum in \eqref{F} is achieved elsewhere. For any $j$ with $k_2 < j \le i_R$,
\begin{equation}\label{dd}\begin{split}
 \frac{\rd}{\rd{\delta}}\Big|_{\delta=0}&[(x_j-(x_{k_1}-\delta))^{-s-1} + (x_j-(x_{k_2}+\delta))^{-s-1}] \\
= & (-s-1)[(x_j-(x_{k_1}-\delta))^{-s-2} - (x_j-(x_{k_2}+\delta))^{-s-2}]|_{\delta=0} > 0,
\end{split}\end{equation}
since $-s-1<0$ and $x_j-x_{k_1} > x_j-x_{k_2}$. Similarly for any $j$ with $i_L\le j < k_1$,
\begin{equation}
\frac{\rd}{\rd{\delta}}\Big|_{\delta=0}[((x_{k_1}-\delta)-x_j)^{-s-1} + ((x_{k_2}+\delta)-x_j)^{-s-1}]  > 0.
\end{equation}
This shows that for any $k$ with $k_2 \le k \le i_R-1$ or $i_L\le k < k_1$, $P_k$ is increased if $\delta>0$ is small. Thus $F_m$ is increased. By doing this $[(k_2-k_1)/2]$ times, one reaches the interior of $\mathbb{R}^{i_R-i_L-1}_{\textnormal{sort}}(x_{i_L},x_{i_R})$ while making $F_m$ increased.

{\bf STEP 2}: Show \eqref{lem_F_1} for $X^m$, the global maximum of $F_m$.

From STEP 1, the maximum of $F_m$ is achieved in the interior of $\mathbb{R}^{i_R-i_L-1}_{\textnormal{sort}}(x_{i_L},x_{i_R})$, say at $X^m=(x_{i_L+1}^m,\dots,x_{i_R-1}^m)$. Suppose on the contrary that \eqref{lem_F_1} is not true, then there exists $k$ with $i_L\le k \le i_R-1$ such that $P_k>F_m$. If $i_L< k < i_R-1$, then by replacing $x_{k}$ and $x_{k+1}$ by $x_{k}-\delta$ and $x_{k+1}+\delta$ respectively, with $\delta>0$ small enough, we can show similarly (see \eqref{dd}) that $P_k$ is slightly decreased, while still being larger than $F_m$, and all other $P_{k'},\,k'\ne k$, are increased. Thus $F_m$ is increased, which is a contradiction against the maximality.  If $k=i_L$ or $k=i_R-1$, then adjusting $x_k$ or $x_{k+1}$ respectively in a similar way will give the same conclusion.


{\bf STEP 3}: Show that \eqref{lem_F_1} is exactly the characterizing condition of the unique global minimizer of $\cE$.

Since $\cE$ is convex and going to infinity near the boundary, the global minimizer of $\cE$ on $\mathbb{R}^{i_R-i_L-1}_{\textnormal{sort}}(x_{i_L},x_{i_R})$ is clearly unique, calling it $X^*$, characterized by
\begin{equation}\label{lem_F_1e}
\begin{split}
\partial_k \cE & = -s\cdot \Big(\sum_{i:\, 0\le i < k} (x_k-x_i)^{-s-1} \\
 & \qquad \qquad - \sum_{i:\,k<i \le N} (x_i-x_k)^{-s-1} \Big) = 0,\quad \forall i_L+1\le k \le i_R-1.
 \end{split}
\end{equation}
Notice that the quantity in the above parenthesis is exactly $P_k-P_{k-1}$. Therefore \eqref{lem_F_1e} is equivalent to \eqref{lem_F_1}. Since $X^*$ is the unique point satisfying \eqref{lem_F_1e}, and $X^m$ satisfies \eqref{lem_F_1}, these two points coincide.

{\bf STEP 4}: Show \eqref{lem_F_2}.

Notice that
\[
\begin{split}
\sum_{k=i_L}^{i_R-1} (x_{k+1}-x_k)P_k = &  \sum_{k=i_L}^{i_R-1}\sum_{i,j:\, 0\le i\le k<j\le N} (x_{k+1}-x_k) (x_j-x_i)^{-s-1} \\
= & \sum_{0\le i < j \le N} \sum_{k=\max\{i,i_L\}}^{\min\{j,i_R\}-1} (x_{k+1}-x_k) (x_j-x_i)^{-s-1} \\
= & \sum_{0\le i < j \le N,\, i< i_R ,\,j> i_L} (x_{\min\{j,i_R\}}-x_{\max\{i,i_L\}}) (x_j-x_i)^{-s-1}. \\
\end{split}
\]
At $X^*$, we have $F_m = P_k,\,i_L\le k \le i_R-1$. Thus \eqref{lem_F_2} follows.

\end{proof}

\begin{proof}[Proof of Lemma \ref{lem_M2}]
We apply Lemma \ref{lem_F} with
\begin{equation}
i_L = \max\{i: x_i < \epsilon_1\},\quad i_R = \min\{i: x_i > 1- \epsilon_1\}.
\end{equation}
Then we get
\begin{equation}\begin{split}
F_m(X) &\le  F_m(X^*) \\
  &= \frac{1}{x_{i_R}-x_{i_L}}  \sum_{0\le i < j \le N,\, i< i_R ,\,j> i_L} (x_{\min\{j,i_R\}}^*-x_{\max\{i,i_L\}}^*) (x_j^*-x_i^*)^{-s-1}  \\
  & \le   \frac{1}{x_{i_R}-x_{i_L}}\sum_{0\le i < j \le N,\, i< i_R ,\,j> i_L} (x_j^*-x_i^*)^{-s} \\
 & \le  \frac{1}{1-2\epsilon_1}\sum_{0\le i < j \le N,\, i< i_R ,\,j> i_L} (x_j^*-x_i^*)^{-s}
\end{split}\end{equation}
for $X=(x_{i_L+1},\dots,x_{i_R-1})$. Notice that
\begin{equation}
\begin{split}
\sum_{0\le i < j \le N,\, i< i_R ,\,j> i_L} (x_j-x_i)^{-s} &= \cE(x_{i_L+1},\dots,x_{i_R-1})-C_0,\\
 & \quad C_0:= \sum_{i_R\le i < j \le N \text{ or } 0\le i < j \le i_L}(x_j-x_i)^{-s}
\end{split}
\end{equation}
for any $X=(x_{i_L+1},\dots,x_{i_R-1})$, where $C_0$ is independent of $X$. Therefore
\begin{equation}
F_m(X) \le \frac{1}{1-2\epsilon_1}(\cE(X^*)-C_0).
\end{equation}

To bound $\cE(X^*)$ from above, we construct
\begin{equation}
\tilde{x}_i = \epsilon_1 + (1-2\epsilon_1)\frac{i}{N},\quad i=0,\dots,N,
\end{equation}
and denote
\begin{equation}
\tilde{\tilde{x}}_i = \left\{\begin{split}& \tilde{x}_i,\quad i_L+1\le i \le i_R-1, \\ & x_i,\quad \text{elsewhere.}\end{split}\right.
\end{equation}
Then by the minimality of $\cE(X^*)$,
\begin{equation}\begin{split}
\cE(X^*) \le & \cE(\tilde{x}_{i_L+1},\dots,\tilde{x}_{i_R-1}) \\
= & C_0 + \sum_{0\le i < j \le N,\, i< i_R ,\,j> i_L} (\tilde{\tilde{x}}_j-\tilde{\tilde{x}}_i)^{-s} \\
\le  & C_0 + \sum_{0\le i < j \le N,\, i< i_R ,\,j> i_L} (\tilde{x}_j-\tilde{x}_i)^{-s} \\
\le  & C_0 + (N+1)\sum_{i=1}^\infty \Big((1-2\epsilon_1)\frac{i}{N}\Big)^{-s}  \\
= & C_0 + (1-2\epsilon_1)^{-s} \cs (N+1)N^{s},
\end{split}\end{equation}
where the second inequality is because when changing from $\tilde{\tilde{x}}$ to $\tilde{x}$, we have
\begin{equation}
\tilde{\tilde{x}}_j-\tilde{\tilde{x}}_i =\begin{cases}\tilde{x}_j-\tilde{x}_i, &  i_L+1 \le i < j \le i_R-1;\\
\tilde{x}_j-x_i \ge \tilde{x}_j-\epsilon_1 \ge \tilde{x}_j-\tilde{x}_i & i \le i_L <  j \le i_R-1;\\
x_j-\tilde{x}_i \ge (1-\epsilon_1) - \tilde{x}_i \ge \tilde{x}_j-\tilde{x}_i&  i_L +1\le i  < i_R \le j;\\
x_j-x_i \ge (1-\epsilon_1) - \epsilon_1 \ge \tilde{x}_j-\tilde{x}_i & i\le i_L  < i_R \le j;
\end{cases}\end{equation}
which includes all the cases appearing in the summation. Therefore we finish the proof by
\begin{equation}
\begin{split}
F_m(X) & \le (1+\frac{1}{N})(1-2\epsilon_1)^{-s-1} \cs N^{s+1} \\
 & \le (1+\frac{1}{N})(1+2.5(s+1)\epsilon_1) \cs N^{s+1} \le (1+\epsilon)\cs N^{s+1}
\end{split}
\end{equation}
for $\epsilon_1\le \frac{0.01}{3(s+1)}$ and $N$ large enough, where the second inequality uses
\begin{equation}
(1-2\epsilon_1)^{-s-1} \le (1+2.2\epsilon_1)^{s+1} \le e^{2.2\epsilon_1(s+1)} \le 1+2.5(s+1)\epsilon_1.
\end{equation}

\end{proof}

\begin{remark}
Under the same assumptions as in Lemma \ref{lem_M2}, one can show the existence of an index $i_M$ such that $P_{i_M}\ge (1-\epsilon)\cs N^{s+1}$. We omit the details for this result because it will not be used in the proof of Theorem \ref{thm1}.
\end{remark}

\section{Approximation by flat torus}

%

For given $ z_1(t),\dots, z_N(t)$ satisfying \eqref{sisort}, define the closest pairwise distance and the `maximal density', respectively, by
\begin{equation}\label{delta}
\delta(t) := \min_{1\le i \le N}( z_{i+1}(t)-  z_i(t)),\quad \rho_M(t) := \frac{1}{N\delta(t)}
\end{equation}
with $z_{N+1}$ understood as $z_1$.
Furthermore, at a fixed time $t$, we set
\begin{equation}
i_M  := \text{argmin}_i ( z_{i+1}-  z_i)
\end{equation}
as the index of the closest pair of particles.
Finally, we define
\begin{equation}
d(y,z) := \min_{k\in\mathbb{Z}} |y-z+k|
\end{equation}
as the distance between $y$ and $z$ on the flat torus. It is clear that $d(y,z)=|y-z|$ if $|y-z|\le \frac{1}{2}$.

\begin{lemma}\label{lem_nsi}
There exists $r_0>0$ such that
\begin{equation}\label{lem_nsi_1}
 |\bx(y)-\bx(z)| \ge \min\{\frac{1}{2}d(y,z),r_0\},\quad \forall y,z.
\end{equation}
\end{lemma}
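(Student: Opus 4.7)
The plan is to split according to whether $y$ and $z$ are near each other on the torus or far apart. By the $1$-periodicity of $\bx$, we may assume $|y-z|=d(y,z)\le \tfrac12$ after translating $z$ by a suitable integer; the quantity $|\bx(y)-\bx(z)|$ is unchanged by this reduction.

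For the near regime, the main input is that $\bx\in C^4$ with $|\bx'|\equiv 1$. Let $M:=\sup_{u}|\bx''(u)|<\infty$ (finite by smoothness and periodicity). A first-order Taylor expansion in arc-length parameter gives
\begin{equation*}
\bx(y)-\bx(z)=\bx'(z)(y-z)+R,\qquad |R|\le \tfrac{M}{2}(y-z)^2,
\end{equation*}
so that, using $|\bx'(z)|=1$,
\begin{equation*}
|\bx(y)-\bx(z)|\ge |y-z|-\tfrac{M}{2}(y-z)^2 \ge \tfrac{1}{2}|y-z|=\tfrac{1}{2}d(y,z)
\end{equation*}
whenever $|y-z|\le \eta:=\min\{1/M,\,1/2\}$. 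This handles all $(y,z)$ with $d(y,z)\le \eta$.

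For the far regime, consider the compact set
\begin{equation*}
K:=\bigl\{(y,z)\in [0,1]^2:\, d(y,z)\ge \eta\bigr\}.
\end{equation*}
The function $(y,z)\mapsto |\bx(y)-\bx(z)|$ is continuous on $K$. Because $\bx$ is non-self-intersecting and $1$-periodic, we have $\bx(y)=\bx(z)$ if and only if $y-z\in\mathbb{Z}$; in particular $|\bx(y)-\bx(z)|>0$ on $K$, so by compactness the minimum
\begin{equation*}
m:=\min_{(y,z)\in K}|\bx(y)-\bx(z)|>0
\end{equation*}
is attained and strictly positive. Setting $r_0:=\min\{m,\eta/2\}$ then gives $|\bx(y)-\bx(z)|\ge m\ge r_0$ on $K$, which combined with the near-regime bound proves \eqref{lem_nsi_1} in all cases.

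The only genuine obstacle is verifying the strict positivity used to define $m$: this is precisely the content of the non-self-intersection hypothesis once we have reduced the torus via periodicity. Everything else is a routine Taylor-plus-compactness argument, so I would keep this proof short and simply record the constants $M$, $\eta$, $m$ and $r_0$ explicitly.
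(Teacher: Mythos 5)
Your proof is correct and follows essentially the same two-regime strategy as the paper: a Taylor expansion in arc-length for nearby parameters to get the $\frac12 d(y,z)$ lower bound, and compactness together with the non-self-intersection hypothesis to obtain a positive lower bound $r_0$ on the far regime. You are a bit more explicit about the constants ($M$, $\eta$) and about reducing to $|y-z|=d(y,z)$ via periodicity, but these are presentational differences, not a different argument.
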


See Figure \ref{fig1} for an illustration of \eqref{lem_nsi_1}.

\begin{figure}
\begin{center}
  \includegraphics[width=.6\linewidth]{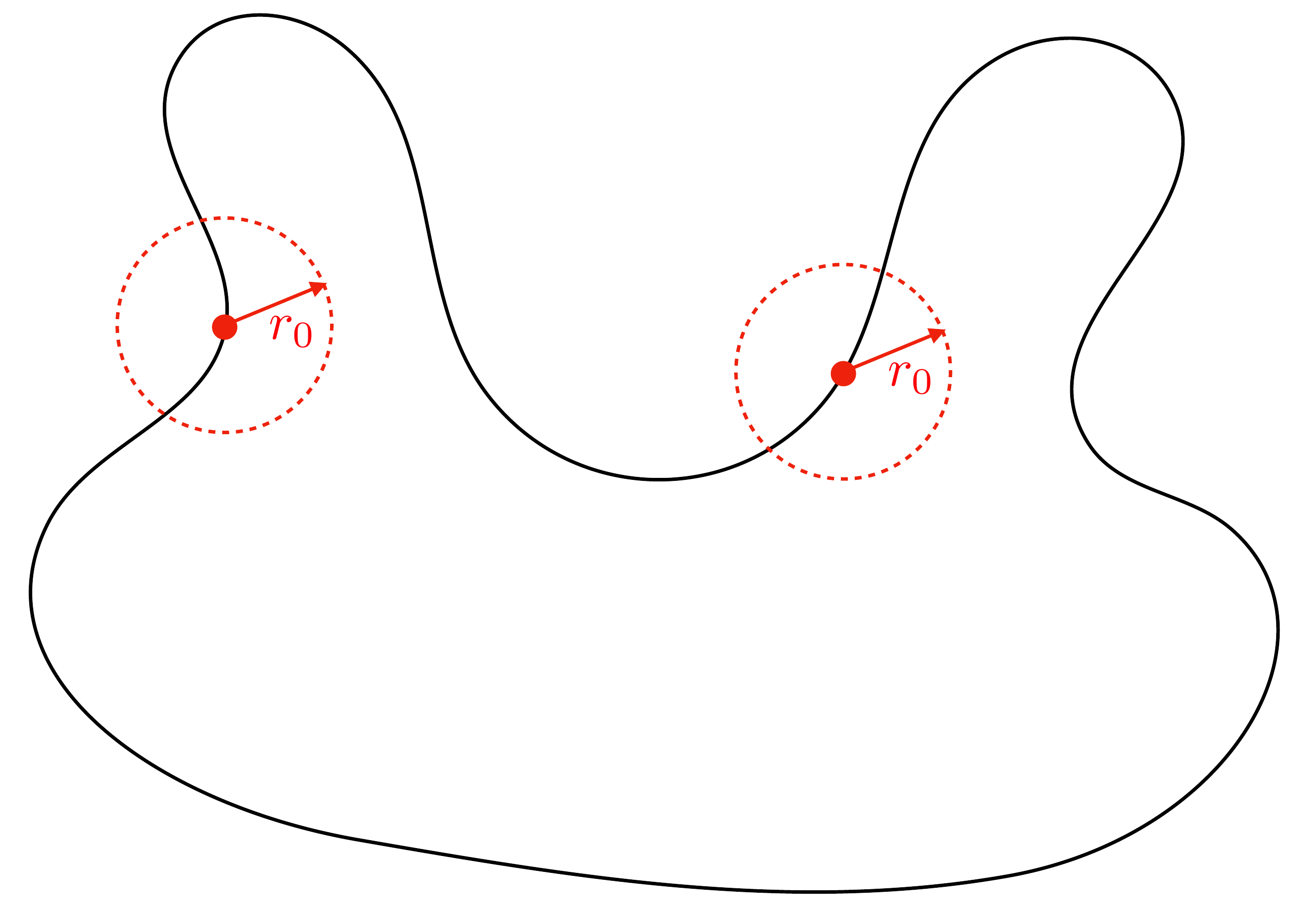}
  \caption{The number $r_0$ in Lemma \ref{lem_nsi} is the range for which $\bx(z)$ can be approximated by a local Taylor expansion near $\bx(y)$  for any fixed $y$. }
\label{fig1}
\end{center}
\end{figure}

\begin{proof}

First, by the Taylor expansion
\begin{equation}
\bx( y)-\bx( z) = ( y- z)\bx'( y) + \calO(( y- z)^2)
\end{equation}
we see that
\begin{equation}\label{nsi1}
\frac{1}{2}|y-z| \le |\bx( y)-\bx( z)| \le \frac{3}{2}|y-z|
\end{equation}
if $|y-z|\le r_1$ is small enough.

Consider the continuous function
\begin{equation}
F(y,z) = |\bx(y)-\bx(z)|
\end{equation}
defined on $\{(y,z)\in \mathbb{T}^2:d(y,z) \ge r_1\}$ which is compact. Since $\bx(z)$ is non-self-intersecting, $F$ is everywhere positive, and achieves its positive minimum on this set, calling it $r_0$.

To show \eqref{lem_nsi_1}, if $d(y,z) \ge r_1$, then the definition of $r_0$ gives
\begin{equation}
|\bx( y)-\bx( z)| \ge r_0.
\end{equation}
If $ d(y,z) = |y-z| < r_1$, then \eqref{nsi1} gives
\begin{equation}
|\bx( y)-\bx( z)| \ge \frac{1}{2}|y-z| = \frac{1}{2}d(y,z).
\end{equation}

\end{proof}

\begin{lemma}\label{lem_appr}
There exist $C_R>0$  and $r_0>0$, depending on the curve $\bx( z)$ and $s$, such that for any $ y\ne  z\in\mathbb{T}$ with $d(y,z)\le r_0$, we have 
\begin{equation}\label{lem_appr_1}
|\nabla W(\bx( y)-\bx( z))\cdot \bx'( y) -  W'( y- z)(1+\kappa(y)|y-z|^2)| \le C_R| y- z|^{-s+2},
\end{equation}
where
\begin{equation}
\kappa(z):= \frac{s-2}{24}|\bx''(z)|^2.
\end{equation}
Furthermore,
\begin{equation}\label{lem_appr_2}
\sgn(\nabla W(\bx( y)-\bx( z))\cdot \bx'( y)) = \sgn(W'( y- z)).
\end{equation}
If  $y,z$ and $\tilde{y}$ additionally satisfy $\tilde{y}-1<z<y<\tilde{y}$, then
\begin{equation}\label{lem_appr_11}\begin{split}
& \Big|\Big(\nabla W(\bx( y)-\bx( z))\cdot \bx'( y) -  W'( y- z)(1+\kappa(y)|y-z|^2)\Big)\\
& -\Big(\nabla W(\bx(\tilde{y})-\bx( z))\cdot \bx'(\tilde{y}) -  W'(\tilde{y}- z)(1+\kappa(y)|\tilde{y}-z|^2)\Big)\Big| \\
\le & C_R\min\{d(y,z),d(\tilde{y},z)\}^{-s+1}\cdot |y-\tilde{y}|
\end{split}\end{equation}
and the same inequality holds if $\kappa(y)$ is replaced by $\kappa(\tilde{y})$.

Moreover, for any $r_1>0$, there exists $C_0(r_1)>0$ such that
\begin{equation}\label{lem_appr_3}
|\nabla W(\bx( y)-\bx( z))| \le C_0(r_1),\quad \forall d(y,z)> r_1.
\end{equation}
\end{lemma}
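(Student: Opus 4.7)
My plan is to prove all four assertions from a single Taylor expansion of $\bx(y)-\bx(z)$ about $y$, using arclength parametrization (so $|\bx'|\equiv 1$, $\bx'\cdot\bx''\equiv 0$, and hence $\bx'\cdot\bx''' = -|\bx''|^2$) and $C^4$ regularity. With $h:=y-z$, the expansion
\begin{equation*}
\bx(y)-\bx(z) = h\bx'(y) - \tfrac{h^2}{2}\bx''(y) + \tfrac{h^3}{6}\bx'''(y) + O(h^4)
\end{equation*}
gives $(\bx(y)-\bx(z))\cdot\bx'(y) = h - \tfrac{h^3}{6}|\bx''(y)|^2 + O(h^4)$ and $|\bx(y)-\bx(z)|^2 = h^2\bigl(1 - \tfrac{h^2}{12}|\bx''(y)|^2 + O(h^3)\bigr)$. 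Raising the latter to the $-(s+2)/2$ power, multiplying by the former, and using $\nabla W(\bx) = -|\bx|^{-s-2}\bx$, I obtain that $\nabla W(\bx(y)-\bx(z))\cdot\bx'(y)$ equals $W'(h)$ times $1 + \bigl(\tfrac{s+2}{24}-\tfrac{1}{6}\bigr)h^2|\bx''(y)|^2 + O(h^3)$. Since $\tfrac{s+2}{24}-\tfrac{1}{6} = \tfrac{s-2}{24}$, this is precisely $W'(h)(1+\kappa(y)|y-z|^2)$ modulo a remainder of size $|W'(h)|\cdot O(h^3) = O(|h|^{-s+2})$, yielding \eqref{lem_appr_1} with $r_0$ chosen small enough for the expansion to be valid.

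Shrinking $r_0$ further if necessary, the bracketed factor $1 + \tfrac{s-2}{24}h^2|\bx''(y)|^2 + O(h^3)$ stays positive, so the sign of $\nabla W(\bx(y)-\bx(z))\cdot\bx'(y)$ agrees with that of $W'(h) = -|h|^{-s-1}\sgn(h)$, which is \eqref{lem_appr_2}. The uniform bound \eqref{lem_appr_3} is immediate from $|\nabla W(\bx(y)-\bx(z))| = |\bx(y)-\bx(z)|^{-s-1}$ together with the positive lower bound on $|\bx(y)-\bx(z)|$ supplied by Lemma \ref{lem_nsi} whenever $d(y,z)\ge r_1$.

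The technical work lies in the Lipschitz-type estimate \eqref{lem_appr_11}. Let $G(y,z)$ denote the left-hand side of \eqref{lem_appr_1}. The plan is to establish the bound $|\partial_y G(y,z)|\le C|y-z|^{-s+1}$ uniformly for $d(y,z)\le r_0$ and then integrate along the arc $[\tilde y,y]$; the assumption $\tilde y - 1 < z < y < \tilde y$ keeps all intermediate points on the same side of $z$ on the torus, so the distance to $z$ along the path is at least $\min\{d(y,z),d(\tilde y,z)\}$. To bound $\partial_y G$, I would extend the Taylor expansion one order further, which is exactly where the $C^4$ hypothesis on $\bx$ is consumed (three derivatives already appear in the leading terms), and keep the remainder in integral form so that $\partial_y$ passes under the integral. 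Each additional $\partial_y$ costs at worst one factor of $|h|^{-1}$ coming from the singular $|\bx(y)-\bx(z)|^{-s-2}$ prefactor, while the smooth factors remain controlled by a $C^4$ norm of $\bx$. The variant of \eqref{lem_appr_11} with $\kappa(\tilde y)$ in place of $\kappa(y)$ produces only the extra term $|\kappa(y)-\kappa(\tilde y)|\cdot h^2\cdot |W'(h)|\le C|y-\tilde y|\cdot|h|^{-s+1}$ by smoothness of the curve, which is absorbed into the same constant.

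The main obstacle I anticipate is this last step: verifying not merely that the Taylor remainder in $G$ has size $O(|h|^{-s+2})$ but that it differentiates cleanly in $y$ to $O(|h|^{-s+1})$. The cleanest route is to keep the remainder in integral (Taylor) form throughout, with all $h$-singularity tracked explicitly through the $|\bx(y)-\bx(z)|^{-s-2}$ prefactor and its $y$-derivative, and with $C^4$ regularity used only to keep the integrands bounded uniformly in $y$ and $z$.
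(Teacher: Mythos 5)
Your approach matches the paper's at every stage: (i) the Taylor expansion of $\bx(y)-\bx(z)$ about $y$ together with the arclength identities $\bx'\cdot\bx''=0$ and $\bx'\cdot\bx'''=-|\bx''|^2$ to identify the coefficient $\tfrac{s-2}{24}$; (ii) positivity of the bracket for \eqref{lem_appr_2}; (iii) Lemma~\ref{lem_nsi} for \eqref{lem_appr_3}; and (iv) a mean-value/integration-of-$\partial_y$ argument for \eqref{lem_appr_11}, noting the ordering $\tilde y-1<z<y<\tilde y$ keeps every intermediate point at distance $\ge\min\{d(y,z),d(\tilde y,z)\}$ from $z$. Your treatment of the variant with $\kappa(\tilde y)$ in place of $\kappa(y)$ is also the same as the paper's. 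So this is the same route.

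The one place you should be more careful is the derivative bound behind \eqref{lem_appr_11}. Your heuristic that ``each additional $\partial_y$ costs at worst one factor of $|h|^{-1}$'' is misleading if read term-by-term: applied to the two constituent pieces of $G$, a single $\partial_y$ produces $\bx'^T\nabla^2W\,\bx'$ and $W''(y-z)$, each of which is genuinely $O(|h|^{-s-2})$, a full three orders worse than the claimed $O(|h|^{-s+1})$. What actually makes the bound work is that the Taylor-expansion cancellations that give $G=O(|h|^{-s+2})$ persist after differentiation; the paper establishes this by an explicit computation of $\phi'(\xi)$, substituting the expansions \eqref{xs1xs20}--\eqref{xs1xs22} into the differentiated expression and watching the $|h|^{-s-2}$ and $|h|^{-s}$ terms cancel, leaving exactly $(s-1)|h|^{-s}(\kappa(\xi)-\kappa(y))+O(|h|^{-s+1})$, which is then bounded using $|\xi-y|\le|\xi-z|$. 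Your alternative of keeping the Taylor remainders in integral form so that $G(w,z)=W'(w-z)\,(w-z)^3\,R(w,z)$ with $R,\partial_w R$ bounded, and then differentiating the product, is a perfectly sound way to package the same cancellation --- but it is precisely the content that needs to be checked, not something that follows from the generic ``one power per derivative'' slogan. As written, your proposal names the right plan but does not carry out this verification, which is the actual substance of Step~3 of the paper's proof.
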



\begin{proof}

We assume hereafter that  $r_0$ is sufficiently small so that Lemma \ref{lem_nsi} applies.

{\bf STEP 1}: We first prove  \eqref{lem_appr_1} and \eqref{lem_appr_2} with the assumption $d(y,z)=|y-z|\le r_0$.

By Taylor expansion for $| y- z|$ small,
\begin{equation}
\bx( y)-\bx( z) = ( y- z)\bx'( y) - \frac{( y- z)^2}{2}\bx''( y) + \frac{( y- z)^3}{6}\bx'''( y) + \calO(( y- z)^4)
\end{equation}
where the error term involves $\|\bx^{(4)}\|_{L^\infty}$. Since the curve length parametrization satisfies $|\bx'( z)|=1$, one obtains
\begin{equation}\label{xs1xs20}
\bx''( z)\cdot\bx'( z)=0,\quad \bx'''( z)\cdot\bx'( z) + |\bx''(z)|^2 = 0
\end{equation}
by differentiating with respect to $z$. Then we have
\begin{equation}\label{xs1xs2}\begin{split}
|\bx( y)-\bx( z)|^2 = & ( y- z)^2\Big[1 + ( y- z)^2\Big(\frac{1}{3}\bx'( y)\cdot\bx'''( y) + \frac{1}{4}|\bx''( y)|^2\Big) \\
 & \qquad \qquad  + \calO(( y- z)^3)\Big] \\
= & ( y- z)^2\left[1 - ( y- z)^2\frac{1}{12}|\bx''( y)|^2 + \calO(( y- z)^3)\right], \\
\end{split}\end{equation}
and
\begin{equation}\label{xs1xs21}\begin{split}
(\bx( y)-\bx( z))\cdot \bx'( y) = & ( y- z)\left[1 + ( y- z)^2\frac{1}{6}\bx'( y)\cdot\bx'''( y) + \calO(( y- z)^3)\right] \\
= & ( y- z)\left[1 - ( y- z)^2\frac{1}{6}|\bx''( y)|^2 + \calO(( y- z)^3)\right]. \\
\end{split}\end{equation}

Also, when $r_0$ is small, we have $O((y-z)^2) \le 1/2$, and thus \eqref{xs1xs2} implies
\begin{equation}\label{xs1xs22}
\begin{split}
|\bx( y)&-\bx( z)|^{-s-2} \\
 &= | y- z|^{-s-2}\left[1 -( y- z)^2\frac{-s-2}{2}\cdot \frac{1}{12}|\bx''( y)|^2 + \calO(( y- z)^3)\right].
\end{split}
\end{equation}
Multiplying this with \eqref{xs1xs21} gives
\[
\begin{split}
 \nabla W(\bx( y)-\bx( z))&\cdot \bx'( y) \\
= & |\bx( y)-\bx( z)|^{-s-2}(\bx( y)-\bx( z))\cdot \bx'( y) \\
= & | y- z|^{-s-2}( y- z)\left[1 + ( y- z)^2\frac{s-2}{24}|\bx''( y)|^2 + \calO(( y- z)^3)\right]
\end{split}
\]
and \eqref{lem_appr_1} with $ | y- z|\le r_0$ follows. Then \eqref{lem_appr_2} follows from the fact that $|W'( y- z)(1+\kappa(y)|y-z|^2)| \ge |y-z|^{-s-1}/2 \ge C_R|y-z|^{-s+2}$ when $|y-z|$ is small enough.

{\bf STEP 2}:  Here we prove \eqref{lem_appr_3}.

If $| y- z|>r_1$, then by Lemma \ref{lem_nsi}, there exists constant $r_1'= \min\{r_1/2,r_0/2\}>0$ such that
\begin{equation}
|\bx( y)-\bx( z)| \ge r_1'.
\end{equation}
Then it follows that
\begin{equation}
|\nabla W(\bx( y)-\bx( z))| = |\bx( y)-\bx( z)|^{-s-1}  \le (r_1')^{-s-1}=:C_0(r_1).
\end{equation}
This gives  \eqref{lem_appr_3}. 

{\bf STEP 3}: Finally we prove \eqref{lem_appr_11}.


We define a function\footnote{As auxiliary functions, $\phi$ may refer to different functions in different proofs.}
\begin{equation}
\phi(z) = \nabla W(\bx( z)-\bx( z))\cdot \bx'( z) - W'(z-z)(1+\kappa(y)|z-z|^2)
\end{equation}
and then the LHS of \eqref{lem_appr_11} is $|\phi(y)-\phi(\tilde{y})| = |\phi'(\xi)|\cdot |\tilde{y}-y|$ for some $\xi\in (y,\tilde{y})$.

Write $\xi = y + \alpha (\tilde{y}-y),\,0\le \alpha \le 1$. By assumption, $d(y,z) = y-z \le r_0$ is small. Therefore
%
\begin{equation}
|\xi-z| = |y-z| + \alpha|\tilde{y}-y| \in [\,|y-z|,2r_0]
\end{equation}
since both $y-z$ and $\tilde{y}-y$ are positive.

Then we compute
\begin{equation}\begin{split}
\phi'(\xi) = & \bx'(\xi)^T\cdot \nabla^2 W(\bx(\xi)-\bx( z))\cdot \bx'(\xi) + \nabla W(\bx( \xi)-\bx( z))\cdot \bx''(\xi) \\
& - W''(\xi-z)(1+\kappa(y)|\xi-z|^2) - W'(\xi-z)\kappa(y)\cdot 2(\xi-z)
\end{split}\end{equation}
where
\begin{equation}
\nabla^2 W(\bar{\bx}) = -|\bar{\bx}|^{-s-2}I + (s+2)|\bar{\bx}|^{-s-4}\bar{\bx}\bar{\bx}^T,\quad \bar{\bx} := \bx(\xi)-\bx( z).
\end{equation}
Therefore, using $|\bx'(\xi)|=1$,
\begin{equation}\begin{split}
\phi'(\xi) =  &-|\bar{\bx}|^{-s-2}  + (s+2)|\bar{\bx}|^{-s-4} (\bx'(\xi)\cdot \bar{\bx})^2 - |\bar{\bx}|^{-s-2}(\bx''(\xi)\cdot \bar{\bx}) \\
& - (s+1)|\xi-z|^{-s-2}(1+\kappa(y)|\xi-z|^2) \\
 & + |\xi-z|^{-s-2}(\xi-z)\kappa(y)\cdot 2(\xi-z) \\
 = & |\bar{\bx}|^{-s-2} \Big[-1 + (s+2)|\bar{\bx}|^{-2} (\bx'(\xi)\cdot \bar{\bx})^2 - (\bx''(\xi)\cdot \bar{\bx})\Big] \\
& - (s+1)|\xi-z|^{-s-2}(1+\kappa(y)|\xi-z|^2) \\
 & + |\xi-z|^{-s-2}(\xi-z)\kappa(y)\cdot 2(\xi-z) \\\end{split}\end{equation}
\eqref{xs1xs20}, \eqref{xs1xs21} and \eqref{xs1xs22} with $y$ replaced by $\xi$ (which is allowed since $|z-\xi|\le 2r_0$, by replacing $r_0$ with a smaller one if necessary), give
\[
\begin{split}
 \phi'& (\xi) \\
  & =  |\xi-z|^{-s-2}\cdot \left[1 -(\xi- z)^2\frac{-s-2}{2} \frac{1}{12}|\bx''(\xi)|^2+\calO\right] \\
& \quad \cdot \Big[ -1 + (s+2)\left(1 +(\xi- z)^2\frac{1}{12}|\bx''(\xi)|^2+\calO\right)\cdot \left(1 - (\xi- z)^2\frac{1}{6}|\bx''(\xi)|^2+\calO\right)^2 \\
 & \qquad + (\xi-z)^2\frac{1}{2}|\bx''(\xi)|^2+\calO  \Big] \\
&  \qquad - (s+1)|\xi-z|^{-s-2}(1+\kappa(y)|\xi-z|^2) + |\xi-z|^{-s-2}(\xi-z)\kappa(y)\cdot 2(\xi-z) \\
& =  |\xi-z|^{-s-2}\cdot \Big[ (s+1) \\
 & \hspace*{3cm} + (\xi-z)^2\cdot\Big( (s+1)\frac{s+2}{24} + \frac{s+2}{12} - \frac{s+2}{3} + \frac{1}{2} \Big)|\bx''(\xi)|^2  +\calO \Big] \\
& \quad - |\xi-z|^{-s-2}\Big[ (s+1) + (\xi-z)^2\cdot\Big( (s+1)\kappa(y) - 2\kappa(y)\Big) +\calO \Big] \\
& =  |\xi-z|^{-s-2}\cdot \Big[ (\xi-z)^2(s-1)\kappa(\xi) -(\xi-z)^2(s-1)\kappa(y) +\calO \Big] \\
& =  \calO(|\xi-z|^{-s+1})
\end{split}
\]
where $\calO$ refers to $\calO((\xi-z)^3)$, and in the last equality we used $|\kappa(y)-\kappa(\xi))| \le \|\kappa'\|_{L^\infty}\cdot |y-\xi|\le \|\kappa'\|_{L^\infty}\cdot |y-\tilde{y}|$. This gives \eqref{lem_appr_11}.

When replacing $\kappa(y)$ by $\kappa(\tilde{y})$, the total change on the LHS of \eqref{lem_appr_11} is no more than $\calO(|y-z|^{-s-1}\cdot |y-z|^2\cdot |y-\tilde{y}|)$ since $|\kappa(y)-\kappa(\tilde{y})| \le \|\kappa'\|_{L^\infty}\cdot |y-\tilde{y}|$, thus controled by the RHS.
\end{proof}

\begin{lemma}\label{lem_E}
For any $\epsilon>0$, there exists (large) $N_0$, depending on $\epsilon,\ s$ and the curve $\bx( z)$, such that the following holds for $N>N_0$ and any positions of the particles $ \bZ=\{z_1,\dots, z_N\}$:
\begin{equation}\label{lem_E_1}
\tcs(1-\epsilon) \le E(\bZ) \le \tcs(1+\epsilon)\rho_M^{s}
\end{equation}
\end{lemma}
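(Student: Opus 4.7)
\textbf{Proof plan for Lemma~\ref{lem_E}.} I would treat the two inequalities separately. The lower bound is essentially immediate from Theorem~\ref{littlepoppy}, which states $\min_\bZ E(\bZ)\to\tcs$ as $N\to\infty$. Consequently, for $N$ larger than some threshold depending on $\epsilon$, every configuration satisfies $E(\bZ)\ge\min_\bZ E(\bZ)\ge\tcs(1-\epsilon)$.

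For the upper bound, the starting point is the purely combinatorial observation that the minimum gap $\delta$ forces $d(z_i,z_{i+k})\ge \min(k,N-k)\delta$ on the torus. Summing the $-s$ powers of these distances, first over $j\ne i$ and then over $i$, yields
\begin{equation*}
\sum_{i\ne j} d(z_i,z_j)^{-s}\le 2\sum_{i=1}^{N}\delta^{-s}\sum_{k=1}^{\lfloor N/2\rfloor} k^{-s}\le 2\zeta(s)N^{s+1}\rho_M^s,
\end{equation*}
since $\delta^{-s}=N^s\rho_M^s$. This already has the shape needed for the conclusion; the remaining task is to pass from the intrinsic distance $d(z_i,z_j)$ to the Euclidean distance $|\bx(z_i)-\bx(z_j)|$ appearing in $W$, without spoiling the constant $\tcs$.

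I would split pairs into near ($d\le r_1$) and far ($d>r_1$) for a small constant $r_1=r_1(\epsilon)$ chosen below. For near pairs, the Taylor expansion \eqref{xs1xs2} gives a multiplicative bound of the form $|\bx(y)-\bx(z)|^{-s}\le d(y,z)^{-s}(1+C_1 d(y,z)^2)\le (1+C_1 r_1^2)\,d(y,z)^{-s}$, whose contribution to $E$ is therefore at most $(1+C_1 r_1^2)\tcs\rho_M^s$. For far pairs, Lemma~\ref{lem_nsi} gives a uniform positive lower bound on $|\bx(y)-\bx(z)|$, so $W$ is bounded by a curve-dependent constant and the far-pair contribution to $E$ is at most a constant times $N^{1-s}$, which tends to $0$ since $s>1$. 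Fixing $r_1$ so that $C_1 r_1^2\le\epsilon/2$ and then choosing $N_0$ large enough to absorb the far-pair error into $(\epsilon/2)\tcs\rho_M^s$ (using $\rho_M\ge 1$, which follows from $\delta\le 1/N$) yields the claim. The only delicate point is this order of quantifiers: $r_1$ must be locked in as a function of $\epsilon$ and the curve (through $C_1$) \emph{before} $N_0$ is selected, so that the curvature-induced multiplicative correction $1+C_1 r_1^2$ is under $\epsilon$-control uniformly in $N$.
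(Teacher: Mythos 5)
Your proof is correct and follows essentially the same route as the paper for the upper bound: split pairs at a scale $r_1$ (the paper calls it $r_0$), use $d(z_i,z_{i+k})\ge\min(k,N-k)\,\delta$ together with the Taylor bound $|\bx(y)-\bx(z)|^{-s}\le(1+C_1d(y,z)^2)\,d(y,z)^{-s}$ for near pairs, bound far pairs by a constant so their contribution is $O(N^{1-s})$, and then fix $r_1$ before $N_0$ — your observation about the order of quantifiers is exactly the one the paper uses.

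Where you differ is the lower bound. You invoke the cited Theorem~\ref{littlepoppy} of Mart\'{\i}nez-Finkelshtein et al.\ to get $\min_\bZ E(\bZ)\to\tcs$, which is legitimate and short. The paper instead keeps this self-contained: it refers forward to Section~\ref{sectEnergy}, where \eqref{EszX} ($\tilde{E}(\bZ)\le E(\bZ)$, because arc-length parametrization gives $|\bx(z)-\bx(z')|\le|z-z'|$) and Lemma~\ref{EsLem}'s \eqref{EstLB} (a Jensen-inequality argument yielding $\tilde{E}(\bZ)\ge s^{-1}\zeta(s;N)$) combine to give $E(\bZ)\ge s^{-1}\zeta(s;N)\to\tcs$. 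The paper's route has the advantage of being elementary and quantitative in $N$, and it reuses machinery (Lemma~\ref{EsLem}) that is needed anyway for Theorem~\ref{thm2}; your route is quicker but imports the full strength of the poppy-seed bagel theorem. Both are valid, and both produce $N_0$ depending only on $\epsilon$, $s$, and the curve as required.
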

\begin{proof}
We first prove the right-hand inequality of \eqref{lem_E_1}.  We rewrite \eqref{EDef}
\begin{equation}
2E(\bZ) = N^{-s-1}\sum_{i} \sum_{j\ne i}W(\bx( z_i)-\bx( z_j)).
\end{equation}
For each fixed $i$, let $i_L,\dots,i_R$ be the indices $j$ with $|z_i- z_j| \le r_0$, where $r_0>0$ is a small constant to be chosen such that Lemma \ref{lem_nsi} applies. From Lemma~\ref{lem_appr} we can write
\begin{equation}
|\bx( z_i)-\bx( z_j)|^{-s} = | z_i- z_j|^{-s}(1+\calO(( z_i- z_j)^2)),
\end{equation}
for $j=i_L,\dots,i_R$ with $j\ne i$. Since $ z_{j+1}- z_j \ge \delta$ for all $j$, we have
\begin{equation}
| z_i- z_j| \ge |j-i|\delta.
\end{equation}
For those $j$ with $d(z_i, z_j) \ge r_0$, Lemma \ref{lem_nsi} gives $|\bx( z_i)-\bx( z_j)|\ge r_0/2$.  Therefore
\begin{equation}\begin{split}
s\sum_{j\ne i}W(\bx( z_i)&-\bx( z_j)) \\
 \le & \sum_{i_L\le j \le i_R,\,j\ne i}| z_i- z_j|^{-s}(1+\calO(( z_i- z_j)^2)) + CNr_0^{-s} \\
\le & (1+\calO(r_0^2))\sum_{i_L\le j \le i_R,\,j\ne i}(|j-i|\delta)^{-s}  + CNr_0^{-s}\\
\le & (1+\calO(r_0^2))2\cs\delta^{-s}  + CNr_0^{-s}.
\end{split}\end{equation}
Summing over $i$, this gives
\begin{equation}
\begin{split}
E(\bZ) & \le (1+\calO(r_0^2))\tcs N^{-s}\delta^{-s}  + CN^{1-s}r_0^{-s} \\
 & = (1+\calO(r_0^2))\tcs \rho_M^{s}  + CN^{1-s}r_0^{-s},
 \end{split}
\end{equation}
where $\rho_M$ is defined in \eqref{delta}.
We first take $r_0$ small enough so that $r_0^2 \le c\epsilon$, and then $N$ large enough so that $CN^{1-s}r_0^{-s}\le \epsilon$, and the conclusion is obtained (since $\rho_M\ge 1$).

Finally,  inequalities \eqref{EszX} and \eqref{EstLB} proved later in Section~\ref{sectEnergy}  imply that the left-hand inequality in \eqref{lem_E_1}   holds for $N$ for sufficiently large.
\end{proof}

\section{Control on the closest pair}

In this section we analyze the evolution of the closest pairwise distance $\delta$ as defined in \eqref{delta}.
We first give an unconditional lower bound of $\frac{\rd}{\rd{t}}\delta$.
\begin{lemma}\label{lem_close0}
There holds
\begin{equation}\label{lem_close0_1}
\frac{\rd}{\rd{t}}\delta \ge -CN^{-s} N_*\delta^{-s+2} ,\quad N_* := \left\{\begin{split}
& 1,\quad s>2; \\
& \log N,\quad s=2; \\
& N^{-s+2},\quad 1<s<2/ \\
\end{split}\right.
\end{equation}
\end{lemma}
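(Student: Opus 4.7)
The plan is to compute $\dot\delta=\dot z_{i_M+1}-\dot z_{i_M}$ directly from the gradient flow and exploit cancellation between the ``pair'' contribution at $(z_{i_M},z_{i_M+1})$ and the forces from all other particles. At times where the minimum is attained uniquely this is an ordinary derivative; otherwise we interpret $\dot\delta$ as the upper Dini derivative and argue with any index realizing the minimum. Writing $A_{i,j}:=\nabla W(\bx(z_i)-\bx(z_j))\cdot\bx'(z_i)$, \eqref{eq_si} gives
\[
\dot\delta=-N^{-s}(A_{i_M+1,i_M}-A_{i_M,i_M+1})-N^{-s}\!\!\sum_{j\ne i_M,i_M+1}\!\!(A_{i_M+1,j}-A_{i_M,j}).
\]
Each individual term here is potentially of size $N^{-s}\delta^{-s-1}$, so the small bound must come entirely from cancellation between the pair piece and the sum.

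The key structural observation is that $z_{j+1}-z_j\ge\delta$ for all $j$; setting $d_k:=z_{i_M+1+k}-z_{i_M+1}$ and $e_k:=z_{i_M}-z_{i_M-k}$ yields $d_1,e_1\ge\delta$ and $d_{k+1}\ge d_k+\delta$, $e_{k+1}\ge e_k+\delta$. This forces, for every $\alpha>0$, the telescoping bound
\[
\sum_{k\ge1}\bigl[d_k^{-\alpha}-(d_k+\delta)^{-\alpha}\bigr]\;\le\;\sum_{k\ge1}\bigl[d_k^{-\alpha}-d_{k+1}^{-\alpha}\bigr]\;=\;d_1^{-\alpha}\;\le\;\delta^{-\alpha},
\]
and similarly for the $e_k$. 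Using Lemma~\ref{lem_appr} I would then decompose, for each close $j$ (with $d(z_{i_M},z_j)\le r_0$),
\[
A_{i_M+1,j}-A_{i_M,j}=[W'(z_{i_M+1}-z_j)-W'(z_{i_M}-z_j)]+\kappa_0[\cdots]+R_j,
\]
with $\kappa_0\approx\kappa(z_{i_M})$, the $\kappa_0$-bracket arising from $W'(x)x^2=-|x|^{-s+1}\mathrm{sgn}(x)$, and the residual satisfying $|R_j|\le C|z_{i_M}-z_j|^{-s+1}\delta$ by \eqref{lem_appr_11}.

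The telescope with $\alpha=s+1$ applied to the flat part, combined with the exact pair contribution $+2N^{-s}\delta^{-s-1}$, produces a nonnegative flat contribution $N^{-s}[2\delta^{-s-1}-S_0^+-S_0^-]\ge 0$. The same telescope with $\alpha=s-1$ applied to the curvature piece produces $N^{-s}\kappa_0[2\delta^{-s+1}-S_2^+-S_2^-]$: for $s>2$ we have $\kappa_0=(s-2)|\bx''|^2/24\ge0$ and this is again nonnegative; for $s=2$ it vanishes identically since $\kappa\equiv0$; for $1<s<2$ it is potentially negative (see the obstacle below). The residual sum contributes
\[
\frac{C\delta^{-s+2}}{N^s}\sum_{k=1}^{K}k^{-s+1},\qquad K\le\min(N,r_0/\delta),
\]
and the tail of $\sum k^{-s+1}$---convergent for $s>2$, logarithmic for $s=2$, of order $K^{2-s}$ for $1<s<2$---produces exactly the factor $N_*$ in the statement. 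For $d(z_{i_M},z_j)>r_0$, \eqref{lem_appr_3} and the smoothness of $\bx$ give $|A_{i_M+1,j}-A_{i_M,j}|\le C\delta$, so the far-field contribution is at most $CN^{1-s}\delta$, subsumed by $CN^{-s}N_*\delta^{-s+2}$ for $N$ large.

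The main obstacle is the regime $1<s<2$: a naive use of the telescope bounds the curvature piece only by $|\kappa|N^{-s}\delta^{-s+1}$, which at $\delta\sim 1/N$ is too weak by a factor $N^{s-1}$. The remedy is a pointwise comparison between the integrands of $S_2^\pm$ and $S_0^\pm$: since $(s-1)x^{-s}=\tfrac{s-1}{s+1}x^2\cdot(s+1)x^{-s-2}$ and on each subinterval $[d_k,d_k+\delta]$ one has $x^2\ge d_k^2\ge\delta^2$, the magnitude of the negative curvature contribution is controlled by a multiple of $\delta^2$ times the already-nonnegative flat contribution. Combined with the error bound above, this keeps the net $\dot\delta$ above $-CN^{-s}N_*\delta^{-s+2}$ in all three regimes.
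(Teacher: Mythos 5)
Your decomposition is essentially the paper's: compute $\dot\delta=\dot z_{i_M+1}-\dot z_{i_M}$, separate the pair term from the near-field sum (handled via Lemma~\ref{lem_appr}'s flat-plus-curvature approximation), bound the residual by $\calO(N^{-s}N_*\delta^{-s+2})$ and the far field by $\calO(N^{1-s}\delta)$, then exploit the telescoping cancellation from $d_{k+1}\ge d_k+\delta$. The paper organizes the telescope through the auxiliary uniform configuration $\tilde z_j=z_{i_M}-(i_M-j)\delta$ and the integrals $I_j=\int_{z_j}^{\tilde z_j}\phi'$ (needed verbatim in the proof of Lemma~\ref{lem_close}); you telescope directly, which is a cleaner phrasing of the same computation, and the sign analysis and residual bookkeeping are correct.

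There is, however, a genuine gap in your curvature absorption for $1<s<2$. You correctly identify the obstacle, but the remedy has the inequality reversed. The lower bound $x^2\ge d_k^2\ge\delta^2$ on $[d_k,d_k+\delta]$ gives $x^{-s}\ge\delta^2x^{-s-2}$, hence $S_2^{\pm}\ge\tfrac{s-1}{s+1}\delta^2 S_0^{\pm}$ — a \emph{lower} bound on the offending term, which cannot yield absorption. If you then used only the claimed factor $\delta^2$, you would be left with an unabsorbed remainder of order $\delta^{-s+1}$, and $\delta^{-s+1}\le CN_*\delta^{-s+2}$ would require $\delta^{-1}\le CN^{2-s}$, false since $\delta^{-1}\ge N$. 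The usable direction is the \emph{upper} bound $x\le d_k+\delta\le 2r_0$ on the near field (this is where locality enters), which gives
\begin{equation*}
 d_k^{-s+1}-(d_k+\delta)^{-s+1}\;\le\;\frac{s-1}{s+1}\,(2r_0)^2\,\bigl[d_k^{-s-1}-(d_k+\delta)^{-s-1}\bigr],
\end{equation*}
and the same factor $(2r_0)^2$ for the boundary piece $\delta^{-s+1}-d_1^{-s+1}$ versus $\delta^{-s-1}-d_1^{-s-1}$ and for the interior gaps $(d_k+\delta,d_{k+1})$. Hence the curvature surplus is at most $Cr_0^2$ times the flat surplus, and $|\kappa|Cr_0^2<1$ for $r_0$ small finishes the absorption. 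This is exactly the mechanism the paper uses in \eqref{absorb} ($\delta|i_M+1-i_L|\le r_0$) for the boundary terms and inside the positivity of $\phi'$ in \eqref{dpsi} for the interior terms; the key constant is $r_0^2$, not $\delta^2$.
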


\begin{proof}

We first compute the time derivative of $\delta$:
\begin{equation}\label{zjzim}\begin{split}
 N^{s}\frac{\rd}{\rd{t}}( z_{i_M +1} &- z_{i_M }) 
 \\
  &  =  -\sum_{j\ne i_M +1} \nabla W(\bx( z_{i_M +1})-\bx( z_j)) \cdot \bx'( z_{i_M +1}) \\
 & \qquad  + \sum_{j\ne i_M } \nabla W(\bx( z_{i_M })-\bx( z_j)) \cdot \bx'( z_{i_M })\\
& =  \nabla W(\bx( z_{i_M})-\bx(z_{i_M+1}))\cdot \bx'( z_{i_M}) \\
 & \qquad + \nabla W(\bx( z_{i_M})-\bx(z_{i_M+1}))\cdot \bx'( z_{i_M+1}) \\
& \qquad + \sum_{j \ne i_M,i_M+1}\Big(\nabla W(\bx( z_{i_M})-\bx(z_j))\cdot \bx'( z_{i_M})\\
 & \hspace*{3.1cm}  - \nabla W(\bx( z_{i_M+1})-\bx(z_j))\cdot \bx'( z_{i_M+1})\Big). \\
\end{split}\end{equation}
See Figure \ref{fig2} left as an illustration.

\begin{figure}
\begin{center}
  \includegraphics[width=.49\linewidth]{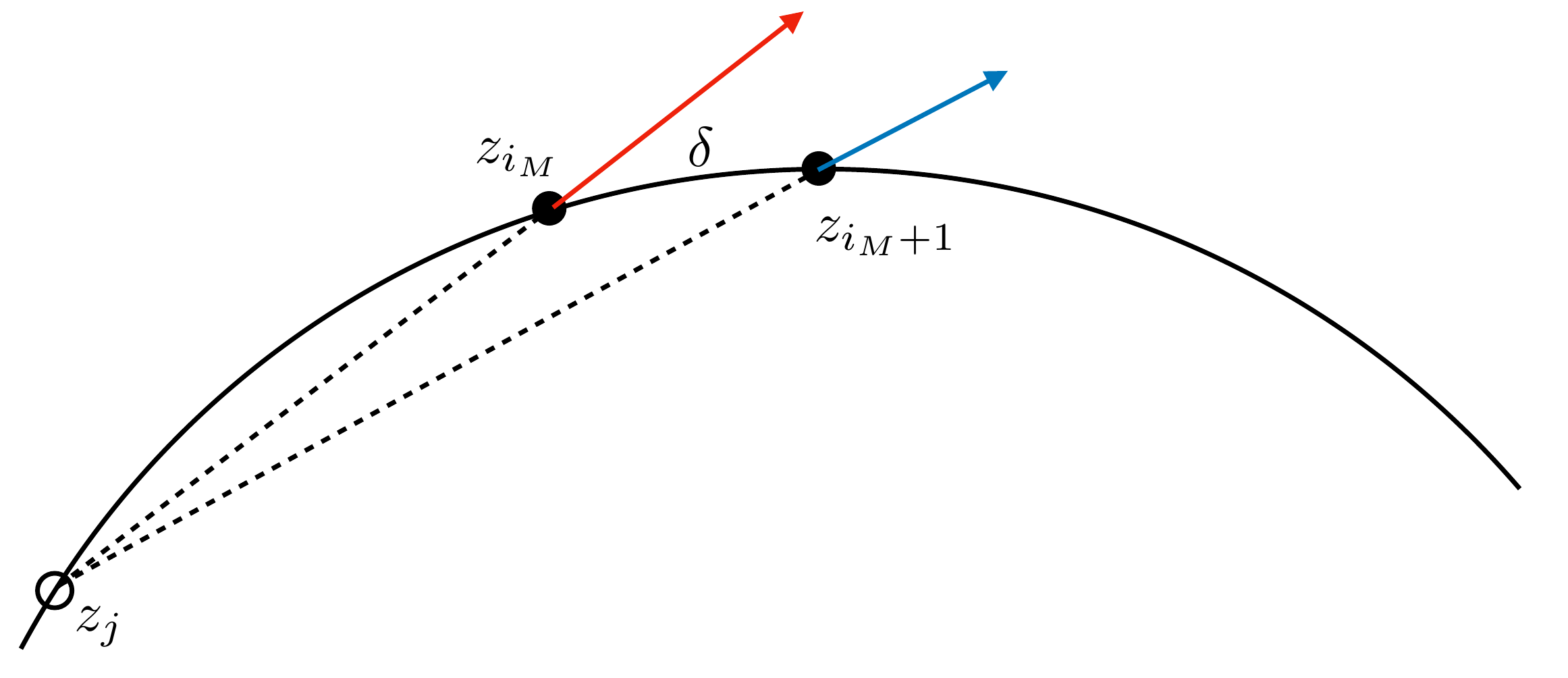}
  \includegraphics[width=.49\linewidth]{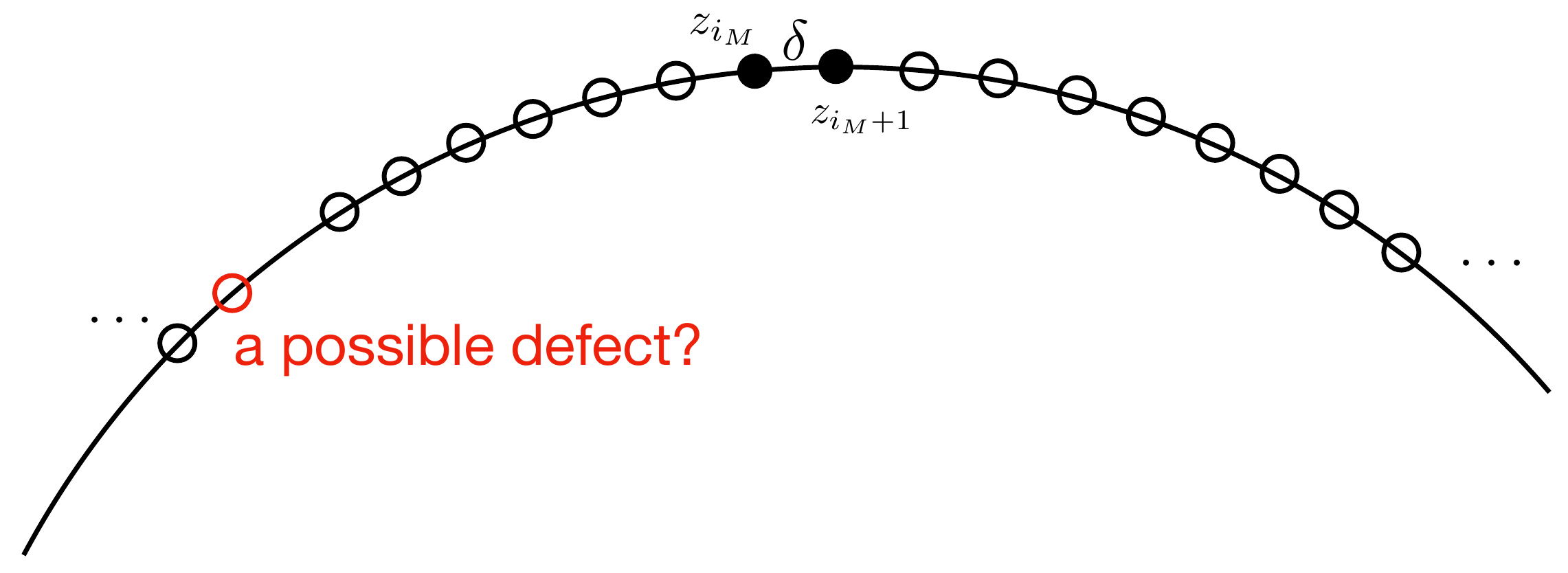}
  \caption{Lemmas \ref{lem_close0} and \ref{lem_close}. Left: the summand in the last term of \eqref{zjzim}. The two terms representing the forces from $z_j$ acting on $z_{i_M}$ (red) and $z_{i_M+1}$ (blue), which decreases/increases $\delta$ respectively. Right: a local uniform distribution like $\{\tilde{z}_j\}$ makes $\frac{\rd}{\rd{t}}\delta \approx 0$ up to errors from curvature. A possible defect will release the total pushing force on $\delta$, make $\frac{\rd}{\rd{t}}\delta$ positive, and thus violate \eqref{lem_close_1}.}
\label{fig2}
\end{center}
\end{figure}

Now we estimate the summand in the last term of \eqref{zjzim} for each $j$, see Figure \ref{fig2} top for an illustration. First notice that if $ d(z,z_{i_M}) \ge r_0$ and $ d(z,z_{i_M+1}) \ge r_0$, then Lemma \ref{lem_nsi} implies that $|\bx( z)-\bx(u)|$ is uniformly bounded below by some $r_1>0$ for any $ z_{i_M}\le u \le  z_{i_M+1}$. Then
\begin{equation}\label{pairest1}\begin{split}
 |\nabla W(\bx( z_{i_M})& -\bx( z))\cdot \bx'( z_{i_M}) - \nabla W(\bx( z_{i_M+1})-\bx( z))\cdot \bx'( z_{i_M+1})| \\
& =  \Big| \int_{ z_{i_M}}^{ z_{i_M+1}} \Big(\bx'(u)^T \nabla^2W(\bx(u)-\bx( z))\bx'(u) \\
 & \hspace*{2.5cm} + \nabla W(\bx(u)-\bx( z))\cdot \bx''(u)\Big)\rd{u} \Big| \\
& \le  C\delta,\qquad \forall z \text{ with }d(z,z_{i_M}) \ge r_0,\, d(z,z_{i_M+1}) \ge r_0.
\end{split}\end{equation}

Then we deal with the case $ z\in ( z_{i_M }-r_0, z_{i_M })$. In view of \eqref{lem_appr_11}, we need to estimate the following quantity:
\begin{equation}\begin{split}
-\phi( z) & :=  W'( z_{i_M }- z)(1+\kappa(z_{i_M})|z_{i_M}-z|^2) \\
 & \qquad - W'( z_{i_M +1}- z)(1+\kappa(z_{i_M})|z_{i_M+1}-z|^2) \\
& =  \Big(| z_{i_M }- z|^{-s-1} + \kappa(z_{i_M})| z_{i_M }- z|^{-s+1}\Big)\\
 & \qquad - \Big(| z_{i_M+1 }- z|^{-s-1} + \kappa(z_{i_M})| z_{i_M+1 }- z|^{-s+1}\Big)
\end{split}\end{equation}
whose derivative can be expressed as
\begin{equation}\label{dphi}
\begin{split}
\phi'( z) &= \psi(z_{i_M+1},z)-  \psi(z_{i_M},z),\\
 & \quad \psi(y,z):=(-s-1)|y-z|^{-s-2} + \kappa(z_{i_M})(-s+1)|y- z|^{-s}.
 \end{split}
\end{equation}
Notice that
\begin{equation}\label{dpsi}\begin{split}
\partial_y\psi(y,z) = & (s+1)(s+2)|y-z|^{-s-3} + \kappa(z_{i_M})(s-1)s|y- z|^{-s-1} \\
= & |y-z|^{-s-3}\Big((s+1)(s+2) - \kappa(z_{i_M})(s-1)s |y-z|^2\Big) > 0
\end{split}\end{equation}
if $|y-z|$ is small. Thus
$\phi'(z) > 0$
since $r_0< z_{i_M} < z_{i_M+1}$ and all three points are within a distance of $r_0+\delta \le r_0+\frac{1}{N}$ which is small.

 Let $i_L,\dots,i_R$ be the indices $j$ with $|z_{i_M}- z_j| \le r_0$. Define the uniform configuration with spacing $\delta$:
\begin{equation*}
\tilde{ z}_j :=  z_{i_M } - (i_M -j)\delta ,\quad i_L\le j \le i_M -1
\end{equation*}
and notice that $ z_j\le \tilde{ z}_j$ by definition of $i_M $. With $I_j:=\int_{ z_j}^{\tilde{ z}_j}\phi'( z)\rd{ z}$, we have
\begin{equation}\label{pairest2}  \begin{split} 
 \sum_{j=i_L}^{i_M -1}&\Big(W'( z_{i_M }- z_j)(1+\kappa(z_{i_M})|z_{i_M} -z_j|^2) \\
  & \qquad - W'( z_{i_M +1}- z_j)(1+\kappa(z_{i_M})|z_{i_M+1}-z_j|^2)\Big) \\
& =   \sum_{j=i_L}^{i_M -1}\left( \Big(W'( z_{i_M }-\tilde{ z}_j)(1+\kappa(z_{i_M})|z_{i_M}-\tilde{z}_j|^2)\right.\\
 &  \qquad \qquad \left. - W'( z_{i_M +1}-\tilde{ z}_j)(1+\kappa(z_{i_M})|z_{i_M+1}-\tilde{z}_j|^2)\Big) + I_j\right)\\
& =   \sum_{j=i_L}^{i_M -1}\left( \Big(W'( (i_M-j)\delta)(1+\kappa(z_{i_M})|(i_M-j)\delta|^2) \right.\\
 & \qquad \qquad \left. - W'( (i_M+1-j)\delta)(1+\kappa(z_{i_M})|(i_M+1-j)\delta|^2)\Big) + I_j\right)\\
& =    W'(\delta)(1+\kappa(z_{i_M})\delta^2) \\
 & \qquad - W'( (i_M+1-i_L)\delta)(1+\kappa(z_{i_M})|(i_M+1-i_L)\delta|^2)  + \sum_{j=i_L}^{i_M -1}I_j \\
= &  -\delta^{-s-1}(1-|i_M +1-i_L|^{-s-1}) \\
 & \qquad -\delta^{-s+1}\kappa(z_{i_M})(1-|i_M +1-i_L|^{-s+1}) + \sum_{j=i_L}^{i_M -1}I_j,
\end{split}\end{equation}
where the third equality follows from a telescoping summation.

Now we have \eqref{pairest1} (together with a similar equality for $i_M +2,\dots,i_R$) and \eqref{pairest2} for the RHS of \eqref{zjzim}.  Combining with \eqref{lem_appr_1} and \eqref{lem_appr_11}, we get
\begin{equation}\label{close1}\begin{split}
 N^{s}\frac{\rd}{\rd{t}}( z_{i_M +1}& - z_{i_M }) \\
& =  2\delta^{-s-1}(1+\kappa(z_{i_M})\delta^2) + \calO(\delta^{-s+2}) \\
& \quad + \sum_{\substack{i_L\le j \le i_R\\  j\ne i_M ,\,i_M +1}}\Big[W'( z_{i_M }- z_j)(1+\kappa(z_{i_M})|z_{i_M}-z_j|^2) \\
 & \quad - W'( z_{i_M +1}- z_j)(1+\kappa(z_{i_M})|z_{i_M+1}-z_j|^2) \\
&  \quad + \calO(( z_{i_M +1}- z_{i_M })(|j-i_M |\delta)^{-s+1})\Big]  + \calO(N\delta) \\
& =  2\delta^{-s-1}(1+\kappa(z_{i_M})\delta^2) + O(\delta^{-s+2}) \\
& \quad  \qquad \Big[-\delta^{-s-1}(1-|i_M +1-i_L|^{-s-1})\\
 & \qquad   \quad -\delta^{-s+1}\kappa(z_{i_M})(1-|i_M +1-i_L|^{-s+1}) + \sum_{j=i_L}^{i_M -1}I_j  \Big] \\
& \qquad + \Big[-\delta^{-s-1}(1-|i_R-i_M|^{-s-1}) \\
 & \qquad \quad -\delta^{-s+1}\kappa(z_{i_M})(1-|i_R-i_M|^{-s+1}) + \sum_{j=i_M +2}^{i_R}I_j \Big] \\
& \qquad + \calO\Big(\delta^{-s+2}\sum_{j=1}^N j^{-s+1}\Big) + O(N\delta) \\
& =  \delta^{-s-1}(|i_M +1-i_L|^{-s-1} + |i_R -i_M|^{-s-1}) \\
 & \qquad + \delta^{-s+1}\kappa(z_{i_M})(|i_M +1-i_L|^{-s+1} + |i_R -i_M|^{-s+1}) \\
& \qquad + \sum_{j=i_L}^{i_M -1}I_j+ \sum_{j=i_M +2}^{i_R} I_j + {\calO}(\delta^{-s+2}N_*)  + \calO(N\delta), \\
\end{split}\end{equation}
where $N_*\sim \sum_{j=1}^N j^{-s+1}$ is defined in \eqref{lem_close0_1}. In the last expression of \eqref{close1}, we can absorb the second term by the first term, using
\begin{equation}\label{absorb}
\delta|i_M+1-i_L| \le \delta\cdot \frac{r_0}{\delta} \le r_0
\end{equation}
and the smallness of $r_0$. The two integrals of $\phi'$ are positive. Therefore
\begin{equation}\begin{split}
& N^{s}\frac{\rd}{\rd{t}}( z_{i_M +1}- z_{i_M }) \ge -C (N_*\delta^{-s+2}  + N\delta).  \\
\end{split}\end{equation}
Then \eqref{lem_close0_1} follows directly by $N\delta\le CN_*\delta^{-s+2}$ which can be easily checked in all three cases, using $N\delta \le 1$.
\end{proof}

Next we state the following lemma: either $\delta(t)$ is increasing very fast, or at $i_M$ the total repulsion is as large as that of a uniform distribution of particles with spacing $\delta(t)$, which is approximately the RHS of \eqref{lem_close_2}.
\begin{lemma}\label{lem_close}
 Fix $\epsilon>0$. For $N>N_0(\epsilon)$, if
\begin{equation}\label{lem_close_1}
\frac{\rd}{\rd{t}}\delta \le 1,
\end{equation}
then
\begin{equation}\label{lem_close_2}
\sum_{i=i_L}^{i_M }\sum_{j=i_M +1}^{i_R}| z_i- z_j|^{-s-1} \ge \cs \delta^{-s-1}(1-\epsilon),
\end{equation}
where $i_L,\dots,i_M -1$ are the indices of particles $ z_i\in ( z_{i_M }-r_0, z_{i_M })$, and $i_M +2,\dots,i_R$ are the indices of particles $ z_i\in ( z_{i_M +1}, z_{i_M +1}+r_0)$.
\end{lemma}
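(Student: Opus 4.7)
My approach is to build on the detailed computation of $N^s\frac{\rd}{\rd t}\delta$ carried out in the proof of Lemma~\ref{lem_close0}. The key identity \eqref{close1} expresses $N^s\frac{\rd}{\rd t}\delta$ as a sum of three essentially non-negative pieces: a telescoping term $\delta^{-s-1}(|i_M+1-i_L|^{-s-1}+|i_R-i_M|^{-s-1})$, the non-negative deviation integrals $\sum_j I_j$, and controlled error terms of order $\calO(\delta^{-s+2}N_*+N\delta)$. Using the hypothesis $\frac{\rd}{\rd t}\delta \le 1$, one directly reads off two bounds: the first forces $|i_M+1-i_L|$ and $|i_R-i_M|$ to each exceed $c N^{1/(s+1)}$, which tends to infinity, and the second gives $\sum_j I_j \le N^s + \calO(\text{errors})$. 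Because $\delta\le 1/N$ and $s>1$, both quantities are $o(\delta^{-s-1})$ for $N$ large.

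The next step is to convert $\sum_j I_j \le N^s$ into positional control on nearby particles. With $\tilde z_j=z_{i_M}-(i_M-j)\delta$ and $\Delta_k:=\tilde z_{i_M-k}-z_{i_M-k}\ge 0$ (analogously on the right), one has $I_{i_M-k}=\phi(\tilde z_{i_M-k})-\phi(z_{i_M-k})$ with $\phi'(z)\asymp \delta\,(z_{i_M}-z)^{-s-3}$ (the main term; the $\kappa$-correction is $\calO(\delta^{-s+1})$). Crucially, the deviations satisfy the monotonicity $\Delta_{k+1}\ge\Delta_k$, a direct consequence of all gaps being $\ge\delta$. Together with $\sum I_j \le N^s$, this implies that for any $\gamma>0$ and all $N$ large enough, the deviations $\Delta_k$ for $k\le c N^{1/(s+1)}$ are at most $\gamma\delta$ except at a vanishing fraction of indices.

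Finally, I compare $R$ to the restricted uniform cross-sum $R_0:=\sum((j-i)\delta)^{-s-1}$ over $i\in[i_L,i_M]$ and $j\in[i_M+1,i_R]$. Writing $z_j-z_i=(j-i)\delta+\Delta_i+\Delta_j\ge(j-i)\delta$ (with the convention $\Delta_{i_M}=\Delta_{i_M+1}=0$) and applying the elementary convexity bound $(a+b)^{-s-1}\ge a^{-s-1}-(s+1)a^{-s-2}b$ valid for $0\le b\le a/(s+1)$, one obtains
\[
R \ge R_0 - C\delta^{-s-2}\sum_{i\le i_M<j}(j-i)^{-s-2}\bigl(\Delta_i+\Delta_j\bigr).
\]
Splitting the resulting sum at $k_0:=N^{1/(s+1)}$: for $k\le k_0$ the positional control above gives $\Delta_k\le\gamma\delta$ (contributing at most $C\gamma\cs\delta^{-s-1}$), while for $k>k_0$ the monotonicity together with the saturation bound $I_k\ge c\delta^{-s-1}k^{-s-2}$ and a Cauchy--Schwarz estimate on the weight $k^{-s-1}=k^{-s-2}\cdot k$ control the tail as $o(\delta^{-s-1})$ (this is where $s>1$ is used, to ensure $\sum k^{-s}<\infty$). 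Combined with $R_0=\cs\delta^{-s-1}\bigl(1-\calO((i_M-i_L)^{1-s})\bigr)\to \cs\delta^{-s-1}$, this yields $R\ge \cs\delta^{-s-1}(1-\epsilon)$ for $N>N_0(\epsilon)$.

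The main technical difficulty is the large-deviation regime of the $\Delta_k$: there $I_k$ saturates and no direct pointwise bound on $\Delta_k$ from $I_k$ is available. The device to handle this is precisely the monotonicity $\Delta_{k+1}\ge\Delta_k$, which localizes the ``bad'' indices to a tail region controlled by the convergent series $\sum k^{-s}$ and hence by the crossover $k_0=N^{1/(s+1)}$.
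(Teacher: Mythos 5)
Your overall strategy differs from the paper's in a way that introduces a genuine gap, concentrated in the tail estimate. Writing $z_j-z_i=(j-i)\delta+\Delta_i+\Delta_j$ and linearizing, you reduce the problem to bounding
\[
\delta^{-s-2}\sum_{i\le i_M<j}(j-i)^{-s-2}\bigl(\Delta_i+\Delta_j\bigr)\;\asymp\;\delta^{-s-2}\sum_{k\ge 1}k^{-s-1}\Delta_k
\]
by $\epsilon\,\delta^{-s-1}$. But the first-order Taylor bound $(a+b)^{-s-1}\ge a^{-s-1}-(s+1)a^{-s-2}b$ is extremely lossy when $b\gg a$ (its right side can be hugely negative while the left side stays positive), and this is exactly what happens for large $k$: the constraint $\sum_j I_j\le CN^s$ only controls the deviations up to an index of order $N^{1/(s+1)}$ (or $N^{1/(s+2)}$ if one uses the weaker bound $I_{i_M-k}\gtrsim\delta^{-s-1}k^{-s-3}$ rather than the saturated $\delta^{-s-1}k^{-s-2}$), whereas the range of $k$ extends to $i_M-i_L$, which can be as large as $r_0/\delta\gtrsim r_0 N$. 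Using the only unconditional bound available, $\Delta_k<r_0$, the tail piece is
\[
\delta^{-s-2}\,r_0\sum_{k>k_0}k^{-s-1}\;\asymp\;\delta^{-s-2}\,r_0\,k_0^{-s},
\]
and with $k_0\sim N^{1/(s+1)}$ and $\delta\le 1/N$ the ratio of this to $\delta^{-s-1}$ is $\gtrsim r_0\,N^{1/(s+1)}\to\infty$. The monotonicity $\Delta_{k+1}\ge\Delta_k$ only tells you the bad indices form a terminal segment, and the Cauchy--Schwarz rewriting $k^{-s-1}=k^{-s-2}\cdot k$ does not rescue the estimate, because in the saturated regime $I_{i_M-k}$ carries no information about how large $\Delta_k$ actually is. So as written, the error in your linearization is not $o(\delta^{-s-1})$, and the last step does not close.

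The paper avoids this entirely by never linearizing over the full range. It fixes a finite $J=J(\epsilon)$, shows \emph{pointwise} that $\Delta_k\le\epsilon\delta$ for each $k\le J$ (via a per-index contradiction: a single $\Delta_k\ge\epsilon\delta$ at a bounded index forces $I_{i_M-k}\gtrsim\epsilon\delta^{-s-1}J^{-s-3}$, which after multiplying by $N^{-s}$ exceeds $1$ for large $N$, contradicting $\frac{\rd}{\rd t}\delta\le 1$), and then lower-bounds the cross-sum by simply restricting to the $J\times J$ block near $(i_M,i_M+1)$ and \emph{dropping} all remaining (nonnegative) terms. Within the block, $z_j-z_i\le(j-i)\delta+2\epsilon\delta$ gives $|z_i-z_j|^{-s-1}\ge\delta^{-s-1}(|j-i|+2\epsilon)^{-s-1}$, and convexity plus the convergence of $\sum_{k>J}k^{-s}$ finish it. Your global constraint $\sum_j I_j\le CN^s$ is, in spirit, equivalent to the paper's per-index contradiction for $k\le J$ fixed (each single term already dominates the budget), so the first half of your argument is sound; the flaw is specifically the decision to push the linearization out to the full range and then try to recover the tail. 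If you replace your Step 3 by the paper's window-and-drop argument, the proof goes through.
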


\begin{proof}

We will use the same notations as the previous proof. We claim that for any fixed $J$, there exists $N_0(\epsilon,J)$ such that, $N>N_0$ and $|i_M +2-j|\le J$ imply
\begin{equation}\label{close_1_1}
\tilde{ z}_j- z_j \le \epsilon\delta, \quad \forall j=i_L,\dots,i_M -1 \text{ with } |i_M +2-j| \le J
\end{equation}
under the condition \eqref{lem_close_1}, see Figure \ref{fig2} right for an illustration.

Suppose on the contrary that $\tilde{ z}_j- z_j > \epsilon\delta$ for some $j$ in the range as in \eqref{close_1_1}. Then by \eqref{dphi} and \eqref{dpsi}, for any $z\in [\tilde{z}_{j-1},\tilde{z}_j]$,
\begin{equation}\begin{split}
\phi'(z) & =  \int_{z_{i_M} }^{z_{i_M +1}} \partial_y\psi(y,z)\rd{y}\\
& =  \int_{z_{i_M} }^{z_{i_M +1}} |y-z|^{-s-3}\Big((s+1)(s+2) - \kappa(z_{i_M})(s-1)s |y-z|^2\Big)\rd{y} \\
& \ge  c\int_{z_{i_M} }^{z_{i_M +1}} |y-\tilde{z}_j|^{-s-3}\rd{y}
\ge  c\delta|z_{i_M +1}-z|^{-s-3}\\
& \ge   c\delta^{-s-2}|i_M +2-j|^{-s-3},
\end{split}\end{equation}
where in the first inequality the second term in the integrand is absorbed by the first term using the smallness of $|y-z|\le r_0$. Therefore
\begin{equation}\begin{split}
\int_{ z_j}^{\tilde{ z}_j}\phi'( z)\rd{ z} \ge & \int_{\max\{ z_j,\tilde{ z}_{j-1}\}}^{\tilde{ z}_j}\phi'( z)\rd{ z} \ge \min\{\delta,\tilde{ z}_j- z_j\}\phi'(\tilde{ z}_{j-1}) \\
 \ge & \min\{\delta,\tilde{ z}_j- z_j\}\delta^{-s-2}|i_M +2-j|^{-s-3}.
\end{split}\end{equation}
Therefore, if $\tilde{ z}_j- z_j \ge \epsilon \delta$, then
\begin{equation}
\int_{ z_j}^{\tilde{ z}_j}\phi'( z)\rd{ z} \ge c\epsilon \delta^{-s-1}|i_M +2-j|^{-s-3}
\end{equation}
which gives
\begin{equation}\begin{split}
 \frac{\rd}{\rd{t}}( z_{i_M +1}- z_{i_M }) \ge &  N^{-s}\Big(c\epsilon \delta^{-s-1}|i_M +2-j|^{-s-3}  + \calO(N_*\delta^{-s+2})  + O(N\delta)\Big) \\
= &  c\epsilon (N\delta)^{-s-1}|i_M +2-j|^{-s-3} N + \calO((N\delta)^{-s}N_*\delta^{2})  + \calO(N^{-s}(N\delta)) \\
\end{split}\end{equation}
in view of \eqref{close1}. Notice that $N\delta\le 1$, $N_*\delta \le 1$, and $|i_M +2-j|^{-s-3} \ge J^{-s-3}$. Therefore, by taking $N$ large (in terms of $\epsilon$ and $J$), the first term can absorb the other two terms and gives
\begin{equation}\begin{split}
& \frac{\rd}{\rd{t}}( z_{i_M +1}- z_{i_M }) \ge  c\epsilon (N\delta)^{-s-1}J^{-s-3} N \ge 2 \\
\end{split}\end{equation}
which contradicts \eqref{lem_close_1} if $N$ is large enough. Therefore we proved \eqref{close_1_1}.

Similarly one can show that $ z_j - \tilde{ z}_j \le \epsilon\delta$ for $j=i_M +2,\dots,i_R$ with $|j+1-i_M | \le J$, and also $i_M -i_L \ge J,\,i_R-1-i_M  \ge J$.

Now we aim to show \eqref{lem_close_2}. In fact, \eqref{close_1_1} gives
\begin{equation}\begin{split}
 \sum_{i=i_L}^{i_M }\sum_{j=i_M +1}^{i_R}&| z_i- z_j|^{-s-1} \\
&\ge  \sum_{i=i_M -J+1}^{i_M }\sum_{j=i_M +1}^{i_M +J}|(\tilde{ z}_j+\epsilon\delta)-(\tilde{ z}_i-\epsilon\delta)|^{-s-1} \\
& =  \delta^{-s-1}\sum_{i=i_M -J+1}^{i_M }\sum_{j=i_M +1}^{i_M +J}|j-i + 2\epsilon|^{-s-1} \\
& \ge  \delta^{-s-1}\sum_{i=i_M -J+1}^{i_M }\sum_{j=i_M +1}^{i_M +J}(|j-i|^{-s-1} - (s+1)|j-i|^{-s-2}2\epsilon) \\
& \ge  \delta^{-s-1}\left(\sum_{i=i_M -J+1}^{i_M }\sum_{j=i_M +1}^{i_M +J}|j-i|^{-s-1} - C\epsilon\right) \\
\end{split}\end{equation}
where in the second inequality we used the convexity of the function $x\mapsto |x|^{-s-1}$, and in the third inequality we used the convergence of the series $\sum_{i=-\infty}^{i_M }\sum_{j=i_M +1}^{\infty}|j-i|^{-s-2}$. Since $\sum_{i=-\infty}^{i_M }\sum_{j=i_M +1}^{\infty}|j-i|^{-s-1}=\cs $, one can take $J=J(\epsilon)$ large enough so that $$\sum_{i=i_M -J+1}^{i_M }\sum_{j=i_M +1}^{i_M +J}|j-i|^{-s-1} \ge \cs -\epsilon,$$ and then \eqref{lem_close_2} follows.

\end{proof}

\section{Proof of Theorem \ref{thm1}}


\begin{proof}[Proof of Theorem \ref{thm1}]

{\bf STEP 1}: We aim to give a {\it positive} lower bound
\begin{equation}\label{iMiS}
\sum_{i_M+1\le i \le i_S}\dot{ z}_i \ge \lambda(\rho_M) N
\end{equation}
(where $\rho_M$ is defined in \eqref{delta}) under the assumption \eqref{lem_close_1}, where
\begin{equation}
\lambda(\rho_M) = \left\{\begin{split}
& c(\rho_M-1-\epsilon),\quad \rho_M \le 2 \\
& c\rho_M^{s+1} \\
\end{split}\right.
\end{equation}
for some indices $i_M$ and $i_S$. Notice that  the assumption \eqref{lem_close_1} is equivalent to
\begin{equation}\label{iMiSas}
\frac{\rd}{\rd{t}}\rho_M \ge -N^{-1}\delta^{-2} = -N\rho_M^2
\end{equation}
since $\rho_M = \frac{1}{N\delta}$, see Figure \ref{fig3} for an illustration.

\begin{figure}
\begin{center}
  \includegraphics[width=.46\linewidth]{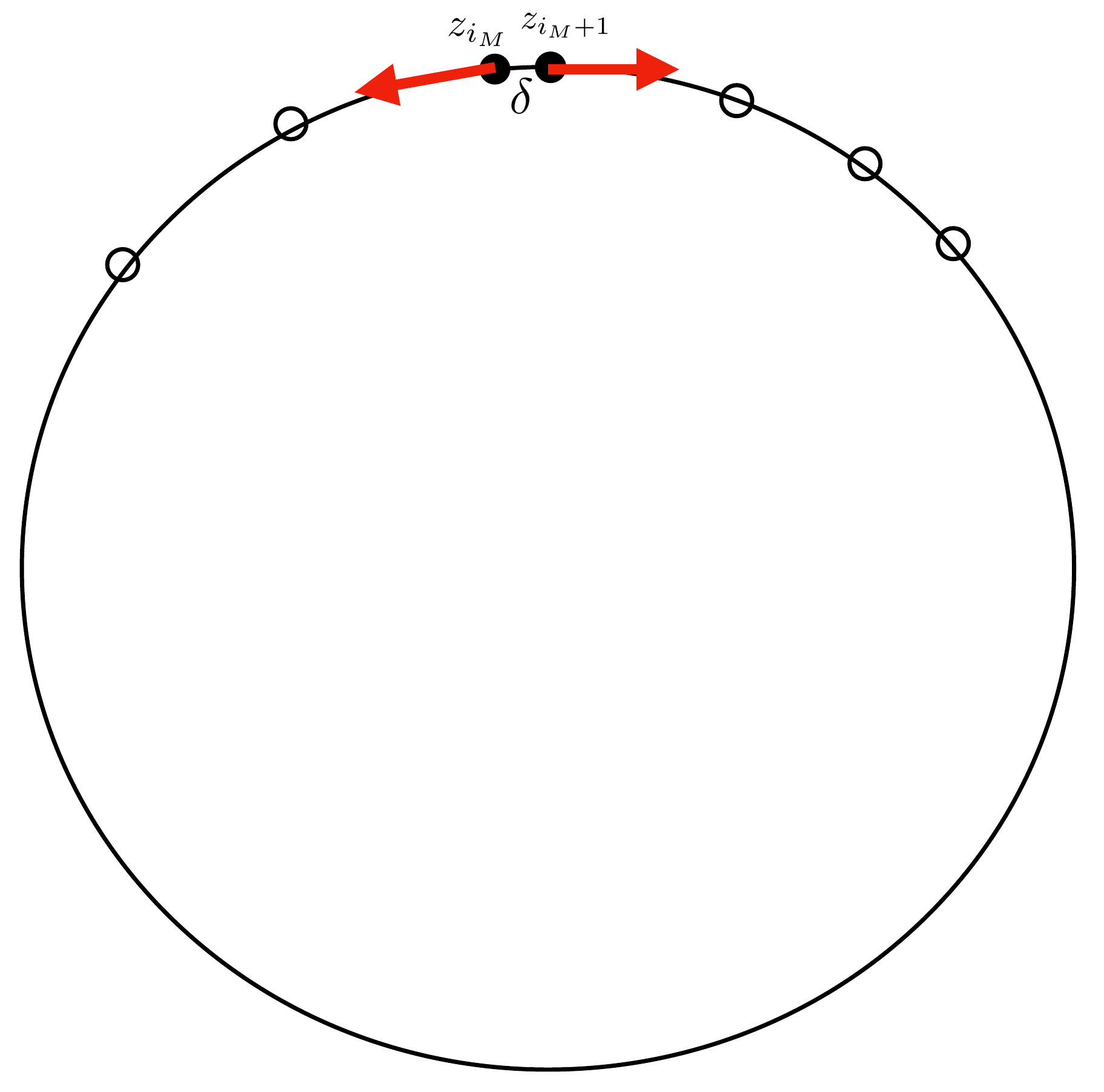}
  \includegraphics[width=.53\linewidth]{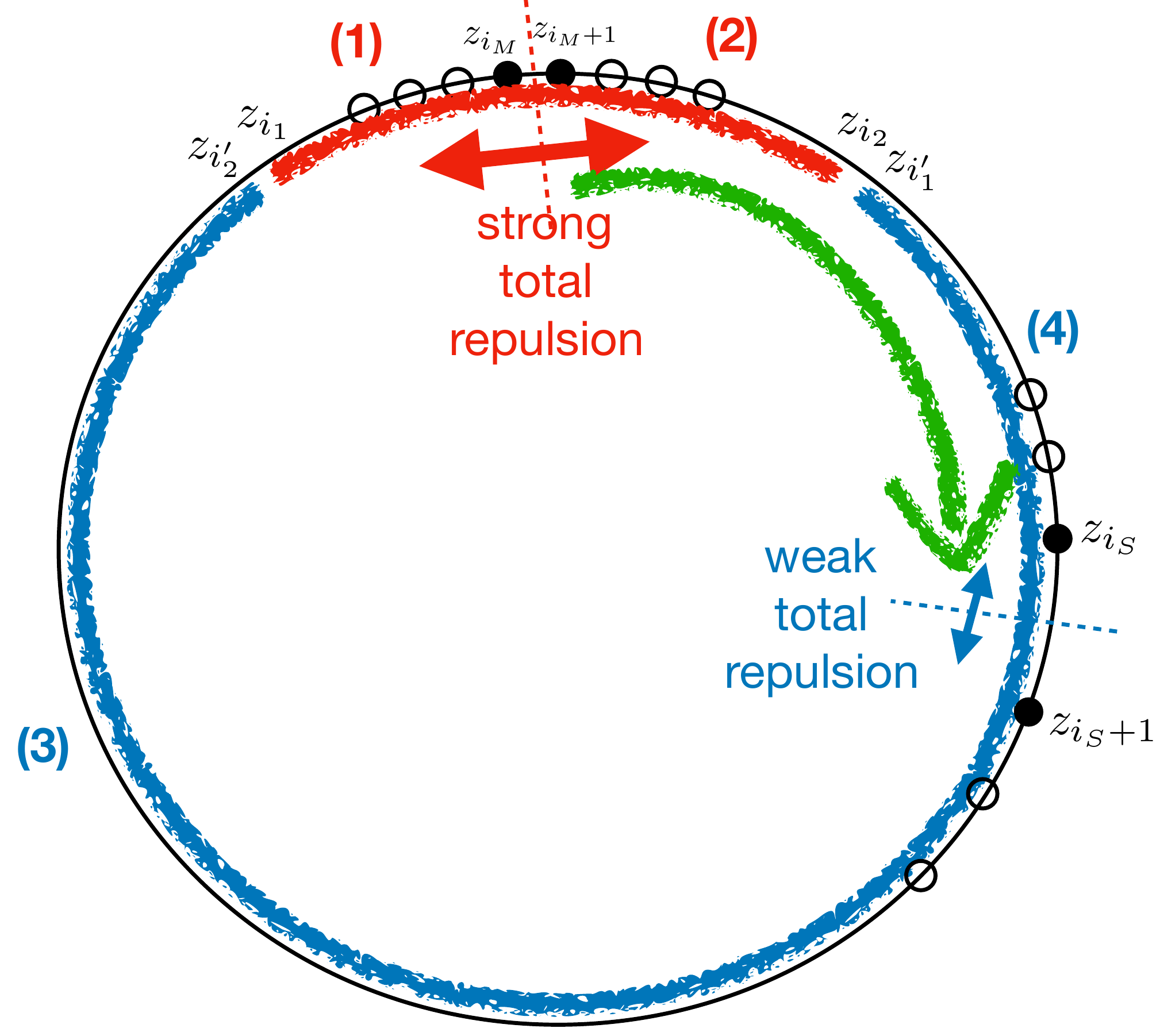}
  \caption{Proof of Theorem \ref{thm1}. Left: when \eqref{lem_close_1} does not hold, $\delta$ is increasing very fast (i.e., $\rho_M$ is decreasing very fast).
  Right: when \eqref{lem_close_1} holds, there is almost uniform distribution near $z_{i_M}$ (red parts) with average density near $\rho_M$, and the total repulsion at $z_{i_M}$ is strong (see \eqref{lem_close_2}). The rest part has average density at most $1+\epsilon$, and Lemma \ref{lem_M2} applies to give a weak total repulsion cut.
  The strong/weak total repulsion ((1)-(2) good contribution, $I_1$, and (3)-(4) bad contribution, $I_2$, see \eqref{sumsi}) forces the green part to rotate. The parameter $r_1$ is to guarantee that (3) or (4) cannot be too short, so that the possible bad contribution from (1)-(4) or (2)-(3) (the term $I_3$) can be neglected.
  }
\label{fig3}
\end{center}
\end{figure}

Using the same notation as in the proof of Lemma \ref{lem_close} (with the same choice of $J$), we have \eqref{close_1_1} from \eqref{lem_close_1}. We take $i_1=i_M -J$ and $i_2=i_M +J$. Then we have
\begin{equation}\label{myI1}
\sum_{i_1\le i\le i_M  < j \le i_2}( z_j-   z_i)^{-s-1} \ge  \cs \delta^{-s-1}(1-\frac{\epsilon}{100}) = \cs (1-\frac{\epsilon}{100})(N\rho_M)^{s+1}.
\end{equation}
Then we take $i_1' = i_2+1$ and $i_2' = i_1+N-1$, which satisfy
\begin{equation}
i_2'- i_1' \ge N - 2J - 2 \ge \frac{N}{2}
\end{equation}
if $N$ is large. Also, by \eqref{close_1_1} we have $ z_{i_1'}- z_{i_M} \le (J+1+\epsilon)\delta \le \frac{J+1+\epsilon}{N}$ and $ z_{i_M}- (z_{i_2'}-1) \le \frac{J+1+\epsilon}{N}$, which implies
\begin{equation}
 z_{i_2'}- z_{i_1'} \ge 1-\frac{2(J+1+\epsilon)}{N} \ge 1-\frac{\epsilon}{100}
\end{equation}
if $N$ is large.

Then Lemma \ref{lem_M2} (with suitable rescaling) applied to $i_1',\dots,i_2'$ gives: there exists an index $i_S$ such that
\begin{equation}
\sum_{i_1'\le i\le i_S < j \le i_2'}( z_j-  z_i)^{-s-1} \le (1+\frac{\epsilon}{100})\cs N^{s+1}
\end{equation}
and
\begin{equation}\label{sdist}
( z_{i_S}, z_{i_S+1})\cap ( z_{i_1'}+r_1, z_{i_2'}-r_1) \ne \emptyset, \quad r_1 =  \frac{\epsilon}{600(s+1)}.
\end{equation}

Now we prove \eqref{iMiS}.
\begin{equation}\label{sumsi}\begin{split}
\sum_{i_M+1\le i \le i_S}\dot{ z}_i = & -N^{-s}\sum_{i_M+1\le i \le i_S}\sum_{i_S+1\le j \le i_M+N} \nabla W(\bx( z_i)-\bx( z_j)) \cdot \bx'( z_i) \\
= & -N^{-s}\Big(\sum_{i_1\le j\le i_M < i \le i_2} + \sum_{i_1'\le i\le i_S < j \le i_2'} + \sum_{\text{others}}\Big) \\
=: & N^{-s}(I_1+I_2+I_3).
\end{split}\end{equation}

Every term in $I_1$ satisfies $ 0<z_i-  z_j \le 2J\delta \le \frac{2J}{N}$ which is small. Thus by applying \eqref{lem_appr_1},
\begin{equation}
I_1 \ge \sum_{i_1\le j\le i_M < i \le i_2}(( z_i-  z_j)^{-s-1} + R_{1,ij})
\end{equation}
with $|R_{1,ij}| \le C_R( z_i-  z_j)^{-s+1}$.

For the terms in $I_2$, if $z_j-z_i> \frac{1}{2}$ and $d(z_i,z_j)<r_0$, then $ d(z_i,z_j) = d(z_i,z_j-1) = z_i-(z_j-1)$, and then by \eqref{lem_appr_2} applied to $z_i$ and $(z_j-1)$,  we have $\nabla W(\bx( z_i)-\bx( z_j)) \cdot \bx'( z_i)<0$ which makes its contribution in \eqref{sumsi} positive. If $| z_j- z_i|>r_0$ then $|\nabla W(\bx( z_i)-\bx( z_j)) \cdot \bx'( z_i)|\le C$ by \eqref{lem_appr_3}. Combined with a similar argument as above for the case $ d(z_i,z_j) =  z_j- z_i<r_0$, we get
\begin{equation}
I_2 \ge -\sum_{i_1'\le i\le i_S < j \le i_2'}(( z_j-  z_i)^{-s-1} + R_{2,ij}) - CN^2
\end{equation}
with $|R_{2,ij}| \le C_R( z_j-  z_i)^{-s+1}$.

We first bound $I_1$ from below. In fact, there exists $C=C(\epsilon)$ such that
\begin{equation}\label{Rij1}
|R_{1,ij}| \le C_R( z_i-  z_j)^{-s+1} \le \frac{\epsilon}{100}( z_i-  z_j)^{-s-1} + C(\epsilon).
\end{equation}
Combining with \eqref{myI1} we get
\[
I_1 \ge \sum_{i_1\le j\le i_M < i \le i_2}(1-\frac{\epsilon}{100})( z_i-  z_j)^{-s-1}-C(\epsilon)N^2\ge (1-\frac{\epsilon}{100})^2 \cs (N\rho_M)^{s+1} - C(\epsilon)N^2.
\]
Similarly
\[
I_2 \ge -(1+\frac{\epsilon}{100})^2 \cs (N(1+\epsilon))^{s+1} - C(\epsilon)N^2.
\]

To bound $I_3$, we recall the definition of $r_1$ in \eqref{sdist}. We notice that for $i \in [i_M+1,i_S]$ and $j\in [i_S+1,i_M+N]$, if $d(z_i,z_j) \le r_1$ and $\nabla W(\bx( z_i)-\bx( z_j)) \cdot \bx'( z_i)>0$, then by \eqref{lem_appr_11} one necessarily has $ z_j\in [ z_i, z_i+r_1]$. The only possibility for this to happen is when $ z_i\in [ z_{i_S+1}-r_1, z_{i_S+1}]$ and $ z_j\in [ z_{i_S}, z_{i_S}+r_1]$. But by \eqref{sdist}, $[ z_{i_S+1}-r_1, z_{i_S+1}] \subset [z_{i_2},z_{i_S+1}]$ and $[ z_{i_S}, z_{i_S}+r_1]\subset [z_{i_S}, z_{i_1'}]$, and thus the term in \eqref{sumsi} with indices $(i,j)$ is already included in $I_2$. Therefore, every term in $I_3$ has either $d( z_i,z_j) > r_1$ or $\nabla W(\bx( z_i)-\bx( z_j)) \cdot \bx'( z_i)\le 0$, and thus
\[
I_3 \ge -C(\epsilon)N^2
\]
by \eqref{lem_appr_3} (where the $\epsilon$-dependence comes from that of $r_1$).

In conclusion, we get
\[
\sum_{i_M+1\le i \le i_S}\dot{ z}_i \ge \left( (1-\frac{\epsilon}{100})^2\rho_M^{s+1} -(1+\frac{\epsilon}{100})^2(1+\epsilon)^{s+1} \right)\cs N - C(\epsilon)N^{-s+2}.
\]
Now we show that the quantity in the big parenthesis above is bounded below. In fact, using $\epsilon<1$,
\[
\begin{split}
 (1-\frac{\epsilon}{100})^2\rho_M^{s+1} &-(1+\frac{\epsilon}{100})^2(1+\epsilon)^{s+1} \\
\ge &\frac{1}{2}(\rho_M-1-2\epsilon) +  (1-\frac{\epsilon}{100})^2(1+2\epsilon)^{s+1} -(1+\frac{\epsilon}{100})^2(1+\epsilon)^{s+1} \\
\ge &\frac{1}{2}(\rho_M-1-2\epsilon) +  (1+\epsilon)^{s+1}((1-\frac{\epsilon}{100})^2(1+\frac{\epsilon}{2}) - (1+\frac{\epsilon}{100})^2) \\
\ge &\frac{1}{2}(\rho_M-1-2\epsilon). \\
\end{split}
\]

Therefore, we get
\[
\sum_{i_M+1\le i \le i_S}\dot{ z}_i \ge \frac{1}{2}(\rho_M-1-2\epsilon) \cs N - C(\epsilon)N^{-s+2} \ge \frac{1}{4}(\rho_M-1-3\epsilon)\cs N
\]
 if $N$ is large. Also, if $\rho_M\ge 2$, then
\[
\begin{split}
(1-\frac{\epsilon}{100})^2\rho_M^{s+1}& -(1+\frac{\epsilon}{100})^2(1+\epsilon)^{s+1} \\
 & \ge \frac{1}{4}\rho_M^{s+1} + 2^{s}(1-\frac{\epsilon}{100})^2 -(1+\frac{\epsilon}{100})^2(1+\epsilon)^{s+1} \ge \frac{1}{4}\rho_M^{s+1}
\end{split}
\]
and we get
\begin{equation}\label{iMiS2}
\sum_{i_M+1\le i \le i_S}\dot{ z}_i \ge c\rho_M^{s+1}N
\end{equation}
 if $N$ is large.

{\bf STEP 2:} We use \eqref{iMiS} (under the condition \eqref{iMiSas}) to give energy dissipation rate, and use it to define a Lyapunov-like functional.

If $\rho_M-1-\epsilon\ge 0$, then Cauchy-Schwarz gives
\[
c^2(\rho_M-1-\epsilon)^2N^2 \le \Big(\sum_{i_M+1\le i \le i_S}\dot{ z}_i\Big)^2 \le (i_S- i_M)\sum_{i_M+1\le i \le i_S}|\dot{ z}_i|^2 \le N \sum_i|\dot{ z}_i|^2.
\]
Recalling the energy dissipation law \eqref{eq0E}, we get
\begin{equation}\label{dE1}
\frac{\rd}{\rd{t}}E(t) \le -c^2((\rho_M-1-\epsilon)_{\ge 0})^2, \quad \text{if }\frac{\rd}{\rd{t}}\rho_M \ge -N\rho_M^2
\end{equation}
and similarly
\begin{equation}\label{dE2}
\frac{\rd}{\rd{t}}E(t) \le -c^2\rho_M^{2(s+1)},\quad \text{if }\frac{\rd}{\rd{t}}\rho_M \ge -N\rho_M^2,\quad \rho_M \ge 2.
\end{equation}

Since $\rho_M = \frac{1}{N\delta}$, Lemma \ref{lem_close0} gives
\begin{equation}\label{drhoM}
\frac{\rd}{\rd{t}} \rho_M = -\frac{1}{N\delta^2}\cdot \frac{\rd}{\rd{t}}\delta \le CN^{-1}\delta^{-2}\cdot N^{-s}N_*\delta^{-s+2} = \frac{CN_*}{N}\rho_M^s.
\end{equation}

Define a Lyapunov-like functional
\begin{equation}
F(t) = E(t) + \rho_M(t)^{s}.
\end{equation}
Then at any time $t$ with $\rho_M(t) \ge 1+2\epsilon$, at least one of the following three options must hold:
\begin{itemize}
\item When $\frac{\rd}{\rd{t}}\rho_M < -N\rho_M^2$, using $\frac{\rd}{\rd{t}}E \le 0$,
\begin{equation}\label{opt1}
\frac{\rd}{\rd{t}}F \le -sN\rho_M^{s+1}.
\end{equation}
\item When $\frac{\rd}{\rd{t}}\rho_M \ge -N\rho_M^2$ and $\rho_M \ge 2$, \eqref{dE2} and \eqref{drhoM} give
\begin{equation}\label{opt2}
\frac{\rd}{\rd{t}}F \le -c\rho_M^{2s+2} + \frac{CN_*}{N}\rho_M^{2s-1} \le -c\rho_M^{2s+2}
\end{equation}
by taking $N$ large, since $\rho_M\ge 1$ always holds and $\lim_{N\rightarrow\infty}\frac{N_*}{N} = 0$.
\item When $\frac{\rd}{\rd{t}}\rho_M \ge -N\rho_M^2$ and $ 1+2\epsilon\le \rho_M < A$ (with $A>2$ an absolute constant to be determined), \eqref{dE1} and \eqref{drhoM} give
\begin{equation}\label{opt3}
\frac{\rd}{\rd{t}}F \le -c(\rho_M-1-\epsilon)^2 + \frac{CN_*}{N}\rho_M^{2s-1} \le -c(\rho_M-1-2\epsilon)^2
\end{equation}
by taking $N$ large (which may depend on $A$).
\end{itemize}

{\bf STEP 3}: We use the functional $F$ to give convergence rate of $\rho_M$ to 1 up to an error of $\calO(\epsilon)$.

Let $T_1$ be the first time such that $\rho_M\le 2$, and we aim to estimate $T_1$. For $0\le t \le T_1$, either \eqref{opt1} or \eqref{opt2} happens. Recall that $E\le C\rho_M^s$ from Lemma \ref{lem_E}, and therefore we have
\begin{equation}
\frac{\rd}{\rd{t}}F \le -cF^{\frac{s+1}{s}}.
\end{equation}
Since $\frac{s+1}{s}>1$, there exists an absolute constant $C_{T,1}$ (independent of $F(0)$) such that $F(C_{T,1}) \le 1/2$ if the above ODE holds for $0\le t \le C_{T,1}$, which contradicts the fact that $F\ge 1$. Therefore there must hold
\begin{equation}
T_1\le C_{T,1}.
\end{equation}

Then we have the estimate
\begin{equation}
F(T_1) \le C\rho_M(T_1)^{s} \le C2^{s} =: A^s
\end{equation}
where $A$ is  the constant appeared in the condition of \eqref{opt3}.

Let $T_2$ be the first time such that $\rho_M\le 1+B\epsilon$, where $B>2$ is a positive constant to be determined. For $T_1\le t \le T_2$, if $\rho_M(t) \le A$, then either \eqref{opt1} or \eqref{opt3} happens, and we have
\begin{equation}
\frac{\rd}{\rd{t}}F \le -c(\rho_M-1-2\epsilon)^2.
\end{equation}
This in particular implies $F(t) \le A^s$ for $T_1\le t \le T_2$, which in turn implies the assumption $\rho_M(t) \le A$. Then
\begin{equation}\begin{split}
\rho_M-1-2\epsilon &\ge  c\Big((1+\epsilon)\tcs +1\Big)\Big(\rho_M^{s}-(1+2\epsilon)^{s}\Big) \\
& \ge  c\left[\Big((1+\epsilon)\tcs +1\Big)\Big(\rho_M^{s}-(1+2\epsilon)^{s}\Big) \right. \\
 & \left. \quad + \Big(E-(1+\epsilon)\tcs \rho_M^{s}\Big)\right] \\
& =  c[F - ((1+\epsilon)\tcs +1)(1+2\epsilon)^{s}]
\end{split}\end{equation}
where the second inequality uses Lemma \ref{lem_E}. Therefore $\tilde{F}:=F- ((1+\epsilon)\tcs +1)(1+2\epsilon)^{s}$ satisfies
\begin{equation}
\frac{\rd}{\rd{t}}\tilde{F} \le -c\tilde{F}^2,\quad T_1\le t \le T_2
\end{equation}
which implies
\begin{equation}
\tilde{F}(t) \le \frac{1}{c(t-T_1)+\frac{1}{\tilde{F}(T_1)}} \le \frac{1}{c(t-T_1)+A^{-s}}.
\end{equation}
Therefore if $t-T_1 \ge \frac{C}{\epsilon}$ with $T_1\le t \le T_2$, then $\tilde{F}(t) \le \epsilon$, which implies
\begin{equation}\label{Fcont}
F(t) \le ((1+\epsilon)\tcs +1)(1+2\epsilon)^{s} + \epsilon.
\end{equation}
On the other hand $\rho_M(t)\le 1+B\epsilon$. This together with Lemma \ref{lem_E} implies
\begin{equation}
F(t) \ge (1-\epsilon)\tcs  + (1+B\epsilon)^{s}
\end{equation}
which is a contradiction against \eqref{Fcont} if $B$ is large enough (only depending on $s$). Therefore we get
\begin{equation}
T_2\le \frac{C}{\epsilon}
\end{equation}
and then Lemma \ref{lem_E} gives
\begin{equation}
E(T_2) \le (1+\epsilon)\tcs \rho_M(T_2)^{s} \le (1+\epsilon)\tcs (1+B\epsilon)^{s}  \le (1+C\epsilon)\tcs.
\end{equation}
$E(t)$ also satisfies the last inequality if $t\ge T_2$, since $E(t)$ is non-increasing.

\end{proof}

\section{Energy and distribution}\label{sectEnergy}


Recall that the energy of a configuration paramatrized by $ \bZ$  is
$$
E= E(\bZ) :=\frac{1}{sN^{s+1}}\sum_{1\le i< j\le N}^N |\bx(z_j)-\bx(z_i)|^{-s},$$
 and observe that
\[
\begin{split}
 E(\bZ) &= \frac{1}{2sN^{s+1}}\sum_{i=1}^N\sum_{j=i+1}^{i+N-1}|\bx(z_j)-\bx(z_i)|^{-s}=  \frac{1}{2sN^{s+1}}\sum_{i=1}^N\sum_{k= 1}^{N-1}|\bx(z_{i+k})-\bx(z_i)|^{-s} \\&= \frac{1}{2}\sum_{k= 1}^{N-1}E^k(\bZ),
\end{split}
\]
 where
 $$
 E^k(\bZ):= \frac{1}{sN^{s+1}}\sum_{i=1}^N|\bx(z_{i+k})-\bx(z_i)|^{-s}.
 $$
One may easily verify that $E^k(\bZ)= E^{N-k}(\bZ)$ for $1\le k<N$ and thus
\begin{equation} \label{EsOE} E (\bZ)= \begin{cases}   \sum_{k= 1}^{\frac{N-1}{2}}E ^k(\bZ), & \text{for $N$ odd},\\
 \sum_{k= 1}^{\frac{N}{2}-1}E^k(\bZ) +(1/2)E^{N/2}(\bZ), & \text{for $N$ even.}
\end{cases}
\end{equation}

For $1\le k\le N-1$, we  define
 $$
\tilde{E}^k(\bZ):=  \frac{1}{sN^{s+1}}\sum_{i=1}^N(z_{i+k} - z_i)^{-s},
 $$
 and
 \begin{equation} \label{EstOE} \tilde{E}(\bZ)= \begin{cases}   \sum_{k= 1}^{\frac{N-1}{2}}\tilde{E} ^k(  \bZ), & \text{for $N$ odd},\\
 \sum_{k= 1}^{\frac{N}{2}-1}\tilde{E}^k( \bZ) +(1/2)\tilde{E}^{N/2}(\bZ), & \text{for $N$ even.}
\end{cases}
\end{equation}

 Since $\bx(z)$ is an arc-length parametrization, we have $|\bx(z)-\bx(z')|\le  |z-z'|$ for all $z,z'\in \mathbb{R}$ and thus
 \begin{equation}\label{EszX}
\tilde{E}(\bZ)\le  E(\bZ),
 \end{equation}
for any $\bZ$.
Let
 \begin{equation}\zeta(s;N):= \sum_{k=1}^{\lfloor\frac{N-1}{2}\rfloor}k^{-s}.
\end{equation}

 \begin{lemma}\label{EsLem} For $k,N\in \mathbb{N}$ and $s>0$,
  \begin{equation}\label{EstkLB}
s^{-1}k^{-s}\le \tilde{E}^k(\bZ) \le k^{-s} \tilde{E}^1(\bZ),
\end{equation}
and
\begin{equation}\label{EstLB}
s^{-1}\zeta(s;N) \le \tilde{E}^1(\bZ)+s^{-1}(\zeta(s;N)-1) \le \tilde{E}(\bZ).
\end{equation}
 \end{lemma}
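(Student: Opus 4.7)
The plan is to derive both estimates from convexity of $t\mapsto t^{-s}$ on $(0,\infty)$, combined with the elementary periodicity identity
$$\sum_{i=1}^N (z_{i+k}-z_i)=k,\qquad 1\le k\le N-1,$$
which follows from $z_{i+N}=z_i+1$: writing $z_{i+k}-z_i=\sum_{j=0}^{k-1}d_{i+j}$ with $d_\ell:=z_{\ell+1}-z_\ell$ and using $\sum_\ell d_\ell=1$, each gap is counted exactly $k$ times in the double sum.

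For the lower bound in \eqref{EstkLB}, I would apply Jensen's inequality to the $N$ positive numbers $(z_{i+k}-z_i)$ with uniform weights $1/N$ and the strictly convex function $t\mapsto t^{-s}$; this yields $\tfrac{1}{N}\sum_i(z_{i+k}-z_i)^{-s}\ge (k/N)^{-s}$, and multiplying by $N/(sN^{s+1})$ gives $\tilde{E}^k(\bZ)\ge s^{-1}k^{-s}$. For the upper bound in \eqref{EstkLB}, I would apply the same Jensen inequality to the $k$ gaps $d_{i+j}$ making up $z_{i+k}-z_i$:
$$(z_{i+k}-z_i)^{-s}=\Big(k\cdot\tfrac{1}{k}\sum_{j=0}^{k-1}d_{i+j}\Big)^{-s}\le k^{-s-1}\sum_{j=0}^{k-1}d_{i+j}^{-s}.$$
Summing over $i=1,\dots,N$, each $d_\ell^{-s}$ appears exactly $k$ times on the right, yielding $\sum_i(z_{i+k}-z_i)^{-s}\le k^{-s}\sum_\ell d_\ell^{-s}$, i.e.\ $\tilde{E}^k(\bZ)\le k^{-s}\tilde{E}^1(\bZ)$.

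Finally, \eqref{EstLB} follows by summing \eqref{EstkLB} termwise in the decomposition \eqref{EstOE}: retain the $k=1$ term as $\tilde{E}^1(\bZ)$, and bound each $k\ge 2$ term below using $\tilde{E}^k(\bZ)\ge s^{-1}k^{-s}$; this gives
$$\tilde{E}(\bZ)\ge \tilde{E}^1(\bZ)+\sum_{k=2}^{\lfloor(N-1)/2\rfloor}s^{-1}k^{-s}=\tilde{E}^1(\bZ)+s^{-1}(\zeta(s;N)-1),$$
where in the even-$N$ case the half-weighted $k=N/2$ contribution is simply discarded as nonnegative. The left inequality of \eqref{EstLB} then reduces to $\tilde{E}^1(\bZ)\ge s^{-1}$, which is precisely \eqref{EstkLB} specialized to $k=1$. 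No serious obstacle arises; the only bookkeeping subtlety is the parity case distinction in \eqref{EstOE}, which only strengthens the lower bound.
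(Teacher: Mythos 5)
Your proposal is correct and follows essentially the same route as the paper: both the lower and upper bounds in \eqref{EstkLB} come from Jensen's inequality for the convex map $t\mapsto t^{-s}$, applied respectively to the $N$ differences $z_{i+k}-z_i$ (whose sum is $k$ by periodicity) and to the $k$ consecutive gaps composing each such difference, with the periodicity counting argument to show each $d_\ell^{-s}$ is picked up exactly $k$ times. Your derivation of \eqref{EstLB} from \eqref{EstOE} by retaining the $k=1$ term and minorizing the rest by $s^{-1}k^{-s}$ (discarding the nonnegative $N/2$ half-term when $N$ is even) also matches the paper's argument.
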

 \begin{proof}
By Jensen's inequality,
\[
\begin{split}
sN^{s+1}\tilde{E}^1(\bZ)&= \sum_{i=1}^N (z_{i+1} - z_i)^{-s}= \frac{1}{k}\sum_{j=0}^{k-1}\sum_{i=1}^N (z_{i+j+1} - z_{i+j})^{-s}\\
&= \sum_{i=1}^N\frac{1}{k}\sum_{j=0}^{k-1} (z_{i+j+1} - z_{i+j})^{-s}\ge sN^{s+1}k^s\tilde{E}^k(\bZ),
\end{split}
\]
and
 $$
\tilde{E}^k(\bZ)= s^{-1}N^{-s}\sum_{i=1}^N(z_{i+k} - z_i)^{-s}\frac{1}{N}\ge s^{-1}\left(\sum_{i=1}^N {(z_{i+k} - z_i}) \right)^{-s}=s^{-1}k^{-s},
$$
proving \eqref{EstkLB}.   From \eqref{EstOE}, it follows that $\tilde{E}(\bZ)\ge \sum_{k=1}^{\lfloor\frac{N-1}{2}\rfloor}\tilde{E}^k(\bZ)$ which together with \eqref{EstkLB} establishes \eqref{EstLB}. \end{proof}

In the next lemma we show that the     mean absolute deviation  of the neighbor arclength distances $d_i:=z_{i+1}-z_i$ is small on the microscopic scale.   As a consequence we derive a macroscopic result showing that the density of points is nearly uniform when $N$ is sufficiently large and the energy is sufficiently close to its minimal value.
\begin{lemma}\label{lem_EZ2}
Let $\epsilon>0$,  $s>1$,   $N\ge 2$, and define
\begin{equation}
\Delta:= 2\left(\frac{2\zeta(s)}{s(s+1)}\right)^{1/2}.
\end{equation}
If  $\bZ=(z_1,z_2,\ldots,z_N)$ satisfies
\begin{equation}\label{EZ2bnd}
\tilde{E}(\bZ) \le s^{-1}\zeta(s;N)(1+\epsilon),
\end{equation}
then the mean absolute deviation of  $d_i:=z_{i+1}-z_i$, $i=1,2,\ldots, N$, satisfies
\begin{equation}\label{Zmad}
\frac{1}{N}\sum_{i=1}^N \left |d_i-\frac{1}{N}\right |\le\frac{\Delta\epsilon^{1/2}}{N}.\end{equation}
\end{lemma}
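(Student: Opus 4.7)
\textbf{Proof plan for Lemma \ref{lem_EZ2}.}

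The plan is to squeeze all information into the one-step energy $\tilde{E}^1(\bZ)=(sN^{s+1})^{-1}\sum_i d_i^{-s}$ and then exploit its quantitative convexity around the uniform configuration. First, from \eqref{EstLB} in Lemma \ref{EsLem}, the hypothesis \eqref{EZ2bnd} gives
\begin{equation*}
\tilde{E}^1(\bZ)\le \tilde{E}(\bZ)-s^{-1}(\zeta(s;N)-1)\le s^{-1}\bigl(1+\zeta(s;N)\epsilon\bigr).
\end{equation*}
Equivalently, setting $\phi(d):=d^{-s}+sN^{s+1}d-(s+1)N^s$, one computes directly that $\phi\ge 0$ with its unique minimum $0$ at $d=1/N$, and that $\sum_{i=1}^N\phi(d_i)=sN^{s+1}\tilde{E}^1(\bZ)-N^{s+1}$, so the bound above reads
\begin{equation*}
\sum_{i=1}^N\phi(d_i)\le N^{s+1}\zeta(s;N)\epsilon\le N^{s+1}\zeta(s)\epsilon.
\end{equation*}

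Second, I would derive a sharp local lower bound for $\phi$ to the left of the minimum. Since $\phi''(d)=s(s+1)d^{-s-2}\ge s(s+1)N^{s+2}$ for $d\in(0,1/N]$, and $\phi(1/N)=\phi'(1/N)=0$, integrating twice yields
\begin{equation*}
\phi(d)\ge \frac{s(s+1)}{2}\,N^{s+2}\bigl(1/N-d\bigr)^2,\qquad 0<d\le 1/N.
\end{equation*}

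Third, split the indices into $A:=\{i:d_i\le 1/N\}$ and $B:=\{i:d_i>1/N\}$. The constraint $\sum_i d_i=1$ gives $\sum_i(d_i-1/N)=0$, hence
\begin{equation*}
\sum_{i\in A}(1/N-d_i)=\sum_{i\in B}(d_i-1/N),\qquad \sum_{i=1}^N\bigl|d_i-1/N\bigr|=2\sum_{i\in A}(1/N-d_i).
\end{equation*}
Applying Cauchy--Schwarz on $A$ together with the pointwise bound from step two gives
\begin{equation*}
\Bigl(\sum_{i\in A}(1/N-d_i)\Bigr)^2\le |A|\sum_{i\in A}(1/N-d_i)^2\le N\cdot\frac{2}{s(s+1)N^{s+2}}\sum_{i\in A}\phi(d_i)\le \frac{2\zeta(s)\epsilon}{s(s+1)},
\end{equation*}
so $\sum_{i}|d_i-1/N|\le 2\bigl(2\zeta(s)/(s(s+1))\bigr)^{1/2}\epsilon^{1/2}=\Delta\epsilon^{1/2}$, which after dividing by $N$ is exactly \eqref{Zmad}.

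The one delicate point is that the obvious uniform convexity bound $\phi''\ge s(s+1)$ is far too weak to yield the desired $O(\epsilon^{1/2}/N)$ rate; one really does need the strong lower bound $\phi''\ge s(s+1)N^{s+2}$, which is only valid on $d\le 1/N$. This is precisely why I restrict Cauchy--Schwarz to the set $A$ and then recover the full $\ell^1$ deviation via the balance identity $\sum(d_i-1/N)=0$; the $B$-side cancels ``for free'' and no lower bound for $\phi$ on large $d_i$ is required.
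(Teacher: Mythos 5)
Your proof is correct and follows essentially the same route as the paper: the function $\phi(d)=d^{-s}+sN^{s+1}d-(s+1)N^s$ is precisely $s$ times the second-order Taylor remainder of $W$ at $1/N$ that the paper writes out explicitly, and both arguments then apply the one-sided bound $W''(\xi)\ge(s+1)N^{s+2}$ for $\xi\le 1/N$, Cauchy--Schwarz over $\{d_i\le 1/N\}$, and the balance identity $\sum_i(d_i-1/N)=0$. The only cosmetic difference is that you explicitly invoke $\zeta(s;N)\le\zeta(s)$ at the end, which the paper leaves implicit.
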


\begin{proof}
Inequalities \eqref{EstLB} and \eqref{EZ2bnd} imply
\begin{equation}\label{sE1bnd}
s \tilde{E}^1(\bZ)\le 1+ \zeta(s;N)\epsilon.
\end{equation}
We write $\tilde{E}^1(\bZ)$ as
\begin{equation}
\tilde{E}^1(\bZ) = \frac{1}{N^{s+1}}\sum_i W(d_i),\quad W(x):=\frac{x^{-s}}{s}.
\end{equation}
The Taylor expansion of $W$ at $\frac{1}{N}$ gives
\begin{equation}
W(d_i) = W(\frac{1}{N}) + W'(\frac{1}{N}) (d_i-\frac{1}{N}) + \frac{1}{2}W''(\xi_i)(d_i-\frac{1}{N})^2,
\end{equation}
where $\xi_i$ is between $d_i$ and $\frac{1}{N}$. Substituting into the previous equation gives
\begin{equation}\begin{split}
s\tilde{E}^1(\bZ) = & \frac{s}{N^{s+1}}\sum_i \Big(W(\frac{1}{N}) + W'(\frac{1}{N}) (d_i-\frac{1}{N}) + \frac{1}{2}W''(\xi_i)(d_i-\frac{1}{N})^2\Big) \\
= & \frac{s}{N^{s+1}}\sum_iW(\frac{1}{N}) + W'(\frac{1}{N}) \frac{s}{N^{s+1}}\sum_i(d_i-\frac{1}{N}) \\
 & +  \frac{s}{2N^{s+1}}\sum_iW''(\xi_i)(d_i-\frac{1}{N})^2 \\
= & 1  + \frac{1}{2}\cdot\frac{s}{N^{s+1}}\sum_iW''(\xi_i)(d_i-\frac{1}{N})^2, \\
\end{split}\end{equation}
using $\sum_i d_i = 1 = \sum_i \frac{1}{N}$. Combined with \eqref{sE1bnd}, we get
\begin{equation}
\frac{1}{2}\cdot\frac{s}{N^{s+1}}\sum_iW''(\xi_i)(d_i-\frac{1}{N})^2 \le \zeta(s;N)\epsilon.
\end{equation}

Notice that for every $i$ with $d_i< 1/N$, we have $\xi_i \in (d_i,\frac{1}{N})$, and thus
\begin{equation}
W''(\xi_i) = (s+1)\xi_i^{-s-2} \ge (s+1)N^{s+2}.
\end{equation}
Therefore,
\begin{equation}\begin{split}
\frac{1}{N}\sum_{i:\,d_i< 1/N} \left |d_i-\frac{1}{N}\right |\le & \left(\frac{1}{N}\sum_{i:\,d_i< 1/N} \left |d_i-\frac{1}{N}\right |^2\right)^{1/2} \\
 \le & \left(\frac{1}{(s+1)N^{s+3}}\sum_{i:\,d_i< 1/N} W''(\xi_i)\left |d_i-\frac{1}{N}\right |^2\right)^{1/2} \\
\le &  \left(\frac{1}{(s+1)N^{s+3}}\cdot \frac{2N^{s+1}}{s}\zeta(s;N)\epsilon \right)^{1/2} \\
= & \left(\frac{2\zeta(s;N)}{s(s+1)}\right)^{1/2}\frac{\epsilon^{1/2}}{N}.
\end{split}\end{equation}
Combined with the fact that
\begin{equation}
\frac{1}{N}\sum_{i} \left |d_i-\frac{1}{N}\right | = 2\cdot \frac{1}{N}\sum_{i:\,d_i< 1/N} \left |d_i-\frac{1}{N}\right |,
\end{equation}
we obtain the conclusion.
\end{proof}


We next show that the macroscopic density must be nearly uniform when the energy is nearly optimal.
\begin{lemma}\label{lem_EZ3}
Let  $0<\epsilon<1$,  $s>1$,  and  $N\ge 2^{-s+1}(s+1)\epsilon^{-1}$.
If  $\bZ=(z_1,z_2,\ldots,z_N)$ satisfies
\begin{equation}\label{EZ3bnd}
\tilde{E}(\bZ) \le s^{-1}\zeta(s;N)(1+\epsilon),
\end{equation}
then for all $a\in \mathbb{R}$ and   $0<L<1$,
\begin{equation}\label{disc0}
 \left|\frac{\#\{i:\,[z_i,z_{i+1})\subset [a,a+L)\}}{N} -L\right|\le \left[L(1-L)\tcs\right]^{1/2}(2\epsilon)^{1/2} .\end{equation}
\end{lemma}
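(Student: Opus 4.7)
The plan is to apply Jensen's inequality to the convex function $x\mapsto x^{-s}$ separately on the gaps $[z_i,z_{i+1})$ contained in $[a,a+L)$ and those contained in its complement, then to convert the resulting scalar inequality (via the convexity of $x\mapsto x^{s+1}$) into the desired discrepancy estimate. First I would use Lemma~\ref{EsLem} (specifically \eqref{EstLB}) together with hypothesis \eqref{EZ3bnd} to derive the one-gap energy bound $\sum_i d_i^{-s}\le (1+\zeta(s;N)\epsilon)N^{s+1}$, where $d_i=z_{i+1}-z_i$. Next I would partition the indices into $S=\{i:[z_i,z_{i+1})\subset[a,a+L)\}$ of size $n$, $S'=\{i:[z_i,z_{i+1})\cap[a,a+L)=\emptyset\}$ of size $n'$, and $S_b$ (the at most two boundary-straddling gaps, of total count $n_b\le 2$). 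Setting $m=n/N$, $m'=n'/N$, and $\eta=n_b/N$, the constraints $\sum_{i\in S}d_i\le L$ and $\sum_{i\in S'}d_i\le 1-L$ combined with Jensen's inequality yield
\[
\Phi(m,m')\;:=\;\frac{m^{s+1}}{L^s}+\frac{(m')^{s+1}}{(1-L)^s}\;\le\; 1+\zeta(s;N)\epsilon,\qquad m+m'=1-\eta.
\]

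The remaining step is to extract $(m-L)^2\le 2L(1-L)\zeta(s)\epsilon/s$ from this scalar inequality. The function $\Phi$ is strictly convex and attains its constrained minimum on the line $\{m+m'=1-\eta\}$ at $(m^*,(m')^*)=(L(1-\eta),(1-L)(1-\eta))$ with value $(1-\eta)^{s+1}\ge 1-(s+1)\eta$. Writing $u=m-L$ and $v=m'-(1-L)$ (so $u+v=-\eta$), a Taylor expansion of $\Phi$ about $(m^*,(m')^*)$ produces a separable quadratic lower bound of the form $\tfrac{s(s+1)}{2}[u^2/L+v^2/(1-L)]$ (times a weight that is close to $1$ when $m/L$ and $m'/(1-L)$ are bounded away from $0$). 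The Cauchy--Schwarz identity
\[
(u+L\eta)^2 \;=\;((1-L)u-Lv)^2\;\le\; L(1-L)\Big(\tfrac{u^2}{L}+\tfrac{v^2}{1-L}\Big)
\]
then extracts the desired $L(1-L)$ factor. Finally, the hypothesis $N\ge 2^{-s+1}(s+1)\epsilon^{-1}$ gives $(s+1)\eta\le 2^s\epsilon$, which allows me to absorb both the boundary correction $(s+1)\eta$ on the right and the shift $L\eta$ on the left into the main $\zeta(s)\epsilon$ term.

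The main obstacle lies in this last step: the second derivative $s(s+1)x^{s-1}$ of $x\mapsto x^{s+1}$ degenerates to $0$ at $x=0$ for $s>1$, so the quadratic Taylor lower bound on $\Phi$ is not uniformly valid on $[0,1]$. A case split on the sign of $u=m-L$ (equivalently on which side of the minimum the configuration sits), combined with the elementary estimate $\min(L,1-L)\le 2L(1-L)$ applied to the one-sided Taylor bound, is needed to recover the symmetric $L(1-L)$ factor. Tracking the constants carefully enough to land at exactly the factor $2$ in front of $L(1-L)\zeta(s)\epsilon/s$ claimed in \eqref{disc0}, while keeping the boundary correction from $n_b$ under control via the hypothesis on $N$, is the other delicate point of the argument.
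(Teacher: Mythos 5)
Your setup is on the right track and essentially matches the paper's: you apply Jensen to the convex map $x\mapsto x^{-s}$ separately over the gaps contained in $[a,a+L)$ and in its complement, deriving (after normalizing by $N^{s+1}$) a scalar inequality of the form
\begin{equation*}
\frac{\alpha^{s+1}}{L^s}+\frac{(1-\alpha)^{s+1}}{(1-L)^s}\le 1+\zeta(s;N)\epsilon+O(1/N),
\end{equation*}
where $\alpha$ is the fraction of gaps inside $[a,a+L)$. You also correctly identify that the boundary-straddling gaps (at most two) contribute an $O(1/N)$ correction that the hypothesis on $N$ must absorb, and that a reduction to $L\le 1/2$ is useful (the paper does exactly this).

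However, the step you flag as the ``main obstacle'' is indeed a genuine gap, and the fix you sketch does not resolve it. You propose a Taylor expansion of $\Phi(m,m')=m^{s+1}/L^s+(m')^{s+1}/(1-L)^s$ about the constrained minimum, but as you note, the second derivative $s(s+1)x^{s-1}$ vanishes at $x=0$ when $s>1$, so no uniform quadratic lower bound of the form $\Phi-\Phi_{\min}\gtrsim s(s+1)[u^2/L+v^2/(1-L)]$ holds on the full range $m\in[0,1]$; the ``case split on the sign of $u$ plus $\min(L,1-L)\le 2L(1-L)$'' remark is not an argument, and it is unclear it would deliver the precise factor $2L(1-L)\tcs\epsilon$ claimed in \eqref{disc0}. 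The paper avoids this entirely by a cleaner, global algebraic route: write $\alpha^{s+1}L^{-s}=\alpha(\alpha/L)^s$ and apply Jensen a second time, now to the convex function $t\mapsto t^s$ with weights $(\alpha,1-\alpha)$, to obtain
\begin{equation*}
\alpha\Big(\frac{\alpha}{L}\Big)^s+(1-\alpha)\Big(\frac{1-\alpha}{1-L}\Big)^s\ge\Big(\frac{\alpha^2}{L}+\frac{(1-\alpha)^2}{1-L}\Big)^s=\Big(1+\frac{(\alpha-L)^2}{L(1-L)}\Big)^s\ge 1+\frac{s(\alpha-L)^2}{L(1-L)},
\end{equation*}
where the final step is Bernoulli's inequality. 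This chain is valid for all $\alpha\in[0,1]$ with no degeneracy issues, produces the $L(1-L)$ denominator automatically from the algebraic identity $\frac{\alpha^2}{L}+\frac{(1-\alpha)^2}{1-L}=1+\frac{(\alpha-L)^2}{L(1-L)}$, and directly yields $(\alpha-L)^2\le 2\tcs L(1-L)\epsilon$ after absorbing the $O(1/N)$ boundary term. To complete your proof along the lines you proposed, you should replace the Taylor-expansion extraction with this double-Jensen-plus-Bernoulli argument.
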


\begin{proof}

First, we may assume $L\le 1/2$, since one can reduce the case $L>1/2$ to $L\le 1/2$ by replacing $[a,a+L)$ by $[a+L,a+1)$.  

Let  $M:=\#\{i:\,z_i\in [a,b]\}$, $J_1:=\{i\in \mathbb{Z}:a\le z_i<z_{i+1}< b\}$, $J_2:= \{i\in \mathbb{Z}:b\le z_i<z_{i+1}<a+1)\}$,  $N_1:=\# J_1$, $N_2:=\# J_2$, and $\alpha=N_1/N$.  If $0<M<N$, then $N_1=M-1$ and  $N_2=N-M-1$ so that $N_1+N_2=N-2$.  If $M=0$ or $M=N$, then 
$N_1+N_2=N-1$.  Thus, $N-N_1-2\le N_2\le N-N_1-1$.    Using the conditions $\epsilon<1$ and $L\le 1/2$, it is straightforward to show that $N_2$ is always positive for sufficiently large $N$.
We also observe that $\sum_{i\in J_1} d_i \le L$ and $\sum_{i\in J_2} d_i \le 1-L$. Therefore, by Jensen's inequality, when $N_1>0$, 
\[
\begin{split}
s\tilde{E}^1(\bZ) \ge & \frac{1}{N^{s+1}}\sum_{i\in J_1} d_i^{-s} + \frac{1}{N^{s+1}}\sum_{i\in J_2} d_i^{-s} \\
  \ge & \frac{N_1}{N^{s+1}}  \left(\frac{1}{N_1}\sum_{i\in J_1} d_i\right)^{-s} + \frac{N_2}{N^{s+1}} \left(\frac{1}{N_2}\sum_{i\in J_2} d_i\right)^{-s} \\
\ge & \frac{N_1}{N^{s+1}}  \left(\frac{L}{N_1}\right)^{-s} + \frac{N_2}{N^{s+1}}  \left(\frac{1-L}{N_2}\right)^{-s}  \\
  = &\alpha^{s+1}L^{-s}+ (1-\frac{2}{N}-\alpha)^{s+1}(1-L)^{-s} \\
\ge & \alpha^{s+1}L^{-s}+ (1-\alpha)^{s+1}(1-L)^{-s}  - \frac{2(s+1)\cdot 2^{-s}}{N}
\end{split}
\]
and it is clear that the last inequality is also true when $N_1=0$.
 Using now  the convexity of $x\to x^s$, we have
\begin{equation}\label{halLB}\begin{split}
s\tilde{E}^1(\bZ) & + \frac{2(s+1)\cdot 2^{-s}}{N}\\
 &\ge\alpha(\alpha/L)^s+(1-\alpha)((1-\alpha)/(1-L))^s\ge\left(\frac{\alpha^2}{L}+\frac{(1-\alpha)^2}{1-L}\right)^s\\
 &=\left(1+\frac{(\alpha-L)^2}{L(1-L)}\right)^s\ge 1+\frac{s}{L(1-L)}(\alpha-L)^2.
\end{split}
\end{equation}

As in the proof of Lemma~\ref{lem_EZ2}, inequalities \eqref{EstLB} and \eqref{EZ3bnd} imply that \eqref{sE1bnd} holds.  By assumption, $\frac{2(s+1)\cdot 2^{-s}}{N} \le  \epsilon\le \zeta(s;N)\epsilon$. So, in light of
\eqref{halLB}, we obtain
$$
(\alpha-L)^2\le  2\epsilon{\zeta(s;N)L(1-L)}/{s} \le \tcs L(1-L)\cdot 2\epsilon,
$$
which,  gives \eqref{disc0}.
\end{proof}

Theorem~\ref{thm2} follows directly from Lemmas~\ref{lem_EZ2} and \ref{lem_EZ3}.
\begin{proof}[Proof of Theorem~\ref{thm2}]
Let $N_0$ be large enough so that  $(1+\epsilon)\cs/ \zeta(s;N_0)\le (1+ 2\epsilon)$. From \eqref{EszX}, we have
$$
\tilde{E}(\bZ) \le E(\bZ)\le  \tcs (1+\epsilon)\le s^{-1}\zeta(s;N) (1+2\epsilon).
$$
Then Lemma~\ref{lem_EZ2} implies   \eqref{Zmad1}   while Lemma~\ref{lem_EZ3} shows that
\eqref{disc1} holds.
\end{proof}

\bibliographystyle{amsplain}

\end{document}